\newtheorem{thm}{Theorem}[section]
\newtheorem{lem}[thm]{Lemma}
\theoremstyle{definition}
\newcommand{\scr}[1]{\mathscr #1}
\definecolor{wco}{rgb}{0.5,0.2,0.3}
\numberwithin{equation}{section} \theoremstyle{remark}
\newcommand{\ua}{\uparrow}
\title{{\bf Order Preservation and  Positive Correlation for Nonlinear Fokker Planck Equations }  }
\author{
{  Panpan Ren }\\
\footnotesize{Department of Mathematics, University of Bonn, Bonn 53115, Germany}\\
\footnotesize{RPPZOE@GMAIL.COM}\\
}
\begin{document}
\allowdisplaybreaks
\def\R{\mathbb R}  \def\ff{\frac} \def\ss{\sqrt} \def\B{\mathbf
B}
\def\N{\mathbb N} \def\kk{\kappa} \def\m{{\bf m}}
\def\ee{\varepsilon}\def\ddd{D^*}
\def\dd{\delta} \def\DD{\Delta} \def\vv{\varepsilon} \def\rr{\rho}
\def\<{\langle} \def\>{\rangle}
  \def\nn{\nabla} \def\pp{\partial} \def\E{\mathbb E}
\def\d{\text{\rm{d}}} \def\bb{\beta} \def\aa{\alpha} \def\D{\scr D}
  \def\si{\sigma} \def\ess{\text{\rm{ess}}}\def\s{{\bf s}}
\def\beg{\begin} \def\beq{\begin{equation}}  \def\F{\scr F}
\def\Ric{\mathcal Ric} \def\Hess{\text{\rm{Hess}}}
\def\e{\text{\rm{e}}} \def\ua{\underline a} \def\OO{\Omega}  \def\oo{\omega}
 \def\tt{\tilde}\def\[{\lfloor} \def\]{\rfloor}
\def\cut{\text{\rm{cut}}} \def\P{\mathbb P} \def\ifn{I_n(f^{\bigotimes n})}
\def\C{\scr C}      \def\aaa{\mathbf{r}}     \def\r{r}
\def\gap{\text{\rm{gap}}} \def\prr{\pi_{{\bf m},\varrho}}  \def\r{\mathbf r}
\def\Z{\mathbb Z} \def\vrr{\varrho} \def\lll{\lambda}
\def\L{\scr L}\def\Tt{\tt} \def\TT{\tt}\def\II{\mathbb I}
\def\i{{\rm in}}\def\Sect{{\rm Sect}}  \def\H{\mathbb H}
\def\M{\mathbb M}\def\Q{\mathbb Q} \def\texto{\text{o}} \def\LL{\Lambda}
\def\Rank{{\rm Rank}} \def\B{\scr B} \def\i{{\rm i}} \def\HR{\hat{\R}^d}
\def\to{\rightarrow}\def\l{\ell}\def\iint{\int}\def\gg{\gamma}
\def\EE{\scr E} \def\W{\mathbb W}
\def\A{\scr A} \def\Lip{{\rm Lip}}\def\S{\mathbb S}
\def\BB{\scr B}\def\Ent{{\rm Ent}} \def\i{{\rm i}}\def\itparallel{{\it\parallel}}
\def\g{{\mathbf g}}\def\Sect{{\mathcal Sec}}\def\T{\mathcal T}\def\BB{{\bf B}}
\def\f{\mathbf f} \def\g{\mathbf g}\def\BL{{\bf L}}  \def\BG{{\mathbb G}}
\def\Bd{{D^E}} \def\BdP{D^E_\phi} \def\Bdd{{\bf \dd}} \def\Bs{{\bf s}} \def\GA{\scr A}
\def\Bg{{\bf g}}  \def\Bdd{{\bf d}} \def\supp{{\rm supp}}\def\div{{\rm div}}
\def\ddiv{{\rm div}}\def\osc{{\bf osc}}\def\1{{\bf 1}}\def\BD{\mathbb D}\def\GG{\Gamma}
\maketitle

\renewcommand{\bar}{\overline}
\renewcommand{\tilde}{\widetilde}
\maketitle

\begin{abstract} By investigating McKean-Vlasov SDEs, the order preservation and positive correlation are characterized for nonlinear Fokker-Planck equations. The main results recover
the corresponding criteria on these properties established in \cite{Wang, Pitt} for diffusion processes or linear Fokker-Planck equations. 
\end{abstract} \noindent
 AMS subject Classification:\  60J60, 58J65.   \\
\noindent
 Keywords: Nonlinear Fokker-Planck equation, Order preservation, Positive correlation.
 \vskip 2cm

 \section{Introduction}

Based on \cite{Pitt}, complete criteria have been established in \cite{Wang} for the order preservation and positive correlation for diffusion processes corresponding to linear Fokker-Planck equations, where the order preservation links to comparison theorem in the literature of SDEs, and the positive correlation arises from statistics is   known as FKG inequality due to \cite{FKG}.  In the present paper we aim to extend these criteria to nonlinear Fokker-Planck equations associated with McKean-Vlasov SDEs.  In the following we first recall the criteria of \cite{Wang}.   

Consider the following second order differential operator on $\R^d$:
$$L=\sum_{i,j=1}^d a_{ij}\partial_i\partial_j+\sum_{i=1}^d b_i\partial_i, $$
where 
$$a=(a_{ij})_{1\le i,j\le d}: \R^d \to \R^d\otimes \R^d,\ \ b=(b_i)_{1\le i\le d}: \R^d\to\R^d$$ are continuous, $a$ is positive definite, such that the martingale problem of $L$ is well-posed, or equivalently, there exists a unique  $L$-diffusion process for any initial distribution.   For any $t\ge 0$ and $\mu\in \scr P$,  the space of all  probability measures on $\R^d$ equipped with the weak topology, let $P^*_t\mu$ be the distribution of the $L$-diffusion  process at time $t$  with   initial distribution $\mu$.

We denote $x\le y$ for $x:=(x_i)_{1\le i\le d},  ~y:=(y_i)_{1\le i\le d}\in \R^d$ if $x_i\le y_i$ for any $1\le i\le d.$ 
Let $\B_b(\R^d)$   be the set of all bounded measurable functions on $\R^d$. Consider  the class of bounded measurable increasing functions:
$$\scr U_b:=\big\{f\in\B_b(\R^d)\big|  f(x)\le f(y) \ \text{for}\  x,y\in\R^d\ \text{with}\ x\le y\big\},$$
and the family of probability measures of positive correlations: 
$$\scr P_{+}:=\big\{\mu\in\scr P|\, \mu(fg)\geq \mu(f)\mu(g)~~f,g\in \scr U_b\big\},$$ where we call $\mu$ satisfies the FKG inequality if $\mu\in \scr P_+$. 
Moreover, we write $\mu \preceq \nu$ for any two probability measures $\mu,\nu\in \scr P,$   if $\mu(f)\leq \nu(f)$ holds  for any  $f\in \scr U_b(\R^d).$ 
Note that  definitions of $\scr P_+$ and $\mu \preceq \nu$ does not change if we replace $\B_b(\R^d)$ by $C_b^k(\R^d)$ for $k\in \mathbb Z_+\cup\{\infty\}$, where $C_b^0(\R^d)=C_b(\R^d)$ denotes the set of all bounded continuous functions on $\R^d$, while when $k\ge 1$ the class  $C_b^k(\R^d)$ consists of bounded functions on $\R^d$  having bounded derivatives up to order $k$.

Now, for the $L$-diffusion process, we denote $P_t^*\in \scr \scr P_t$ if $ P_t^* \scr P_+\subset \scr P_+$.  In this case we call $P_t^*$ preserves  positive correlations. 
 The following result is implied by  \cite[Theorem 1.3 and Theorem 1.4]{Wang}.  

\beg{thm}[\cite{Wang}]\label{CW1}      $P_t^*\in \scr P_+$   for any $t\ge 0$ if and only if the following conditions hold: %%
 \beg{enumerate}
\item[$(a)$]  For any $1\le i,j\le d,$ $a_{ij}\ge 0,$  and $a_{ij}(x)$ depends only on $x_i$ and $x_j.$  
\item[$(b)$]  For any $1\le i\le d$, $b_i(x)$ is increasing in $x_j$ for $j\neq i.$
\end{enumerate}
\end{thm}

Next, let $\bar P_t^*$ be associated with the diffusion process generated by another  operator $\bar L$ satisfying the same assumption on $L$:
$$\bar L=\sum_{i,j=1}^d \bar a_{ij}\partial_i\partial_j+\sum_{i=1}^d \bar b_i\partial_i.$$
We denote $\bar P_t^* \preceq  P_t^*$ if 
$$\bar P_t^*\mu  \preceq    P_t^* \nu,   \ \  \mu,\nu\in \scr P, \mu \preceq \nu.$$
In this case we call these two diffusion processes (or $\bar P_t^*$ and $P_t^*$) order-preserving, and when $L=\bar L$, we call the $L$-diffusion process (or $P_t^*$) monotone. 
The next result follows from  \cite[Theorem 1.3]{Wang}.

\beg{thm}[\cite{Wang}]\label{CW2}      $\bar P_t^* \preceq  P_t^* $   for any $t\ge 0$,  if and only if the following conditions hold:  
 \beg{enumerate}
\item[$(a)$]  For any $1\le i,j\le d,$ $\bar a_{ij}=a_{ij},$  and $a_{ij}(x)$ depends only on $x_i$ and $x_j.$  
\item[$(b)$]  For any $1\le i\le d$, $\bar b_i(x)\le b_i(y) $ for $x\le y$ with $x_i=y_i$.  
\end{enumerate}
\end{thm}
According to Theorem \ref{CW1} and Theorem \ref{CW2} with $L=\bar L$, we see that $P_t^*\in \scr P_+$ implies the monotonicity. 

To conclude this subsection, we  recall the link of  diffusion processes and   linear Fokker-Planck equations. 
By the definition of martingale problem,  we see that    $\mu_t:=P_t^*\mu$ solves the following linear Fokker-Planck equation on $\scr P$:
\beq\label{LFK} \pp_t\mu_t= L^*\mu_t,\end{equation} in the sense that $\mu\in C([0,\infty);\scr P)$ satisfies 
$$\mu_t(f):=\int_{\R^d} f\d\mu_t= \mu_0(f) +\int_0^t\mu_s(Lf)\d s,\ \ t\ge 0, f\in C_0^{\infty}(\R^d).$$
On the other hand, according to the superposition principle, see   \cite{TR, SV},  a solution $\mu_t$ of of \eqref{LFK} with
$$\int_0^t \mu_s\big(\|a\|+|b|\big)\d s <\infty,\ \ t\ge 0$$
is given by $\mu_t:= P_t^*\mu_0, t\ge 0.$ Since the order preservation and positive correlation  are distribution properties of   diffusion processes,  
they are indeed properties of solutions to linear Fokker-Planck equations. 

As mentioned above, we aim to  extend Theorems \ref{CW1}  and \ref{CW2}  to nonlinear Fokker-Planck equations.  Consider the Wasserstein space 
$$\scr P_2=\{\mu\in \scr P: \mu(|\cdot|^2)< \infty \},$$
which is a Polish space under the Wasserstein distance 
$$\W_2(\mu,\nu):=\inf_{\pi\in \C(\mu,\nu)} \bigg(\int_{\R^d\times \R^d}|x-y|^2\pi(\d x,\d y)\bigg)^{\ff 1 2},$$
where $\C(\mu,\nu)$ is the space of all couplings of $\mu$ and $\nu$. 
Consider the following time-distribution dependent second order differential operators
\beq\label{N2} \beg{split} &L_{t,\mu}  
:=\sum_{i,j=1}^d a_{ij}(t,\cdot,\mu)\partial_i\partial_j+\sum_{i=1}^d b_i(t,\cdot, \mu)\partial_i,\\
& \bar L_{t,\mu}  
:=\sum_{i,j=1}^d \bar a_{ij}(t,\cdot,\mu)\partial_i\partial_j+\sum_{i=1}^d \bar b_i(t,\cdot, \mu)\partial_i,\end{split} \end{equation} 
where
\beg{align*} &a=(a_{ij})_{1\le i,j\le d},\ \bar a=(\bar a_{ij})_{1\le i,j\le d}: [0,\infty)\times\R^d\times \scr P_2\to \R^d\otimes \R^d,\\
&b=(b_{i})_{1\le i\le d},\ \bar b=(\bar b_{i})_{1\le i\le d}: [0,\infty)\times\R^d\times \scr P_2\to \R^d \end{align*} are continuous. 
The nonlinear Fokker-Planck equations for $L$ and $\bar L$ are formulated as 
\beq\label{NFP} \pp_t \mu_t= L_{t,\mu_t}\mu_t,\ \ \pp_t \bar \mu_t= \bar L_{t,\bar\mu_t}\bar \mu_t,\ \ \mu_s=\bar\mu_s=\mu, t\ge s.\end{equation}
We call $(\mu_t,\bar\mu_t)_{t\ge s}  \in C([s,\infty);\scr P_2)\times C([s,\infty);\scr P_2)$ a solution to \eqref{NFP}, if
 \beg{align*} & \mu_t(f)= \mu(f)+\int_s^t \mu(L_{r,\mu_r}f)\d r,\\
 &\bar \mu_t(f)= \mu(f)+\int_s^t\bar \mu(\bar L_{r,\bar \mu_r}f)\d r,\ \ t\ge s, f\in C_0^\infty(\R^d).\end{align*}
The nonlinear Fokker-Planck equations can be characterized by distribution dependent SDEs, also called McKean-Vlasov or mean field SDEs. 
Let $W_t$ be a $d$-dimensional Brownian motion on a complete filtration probability space $(\OO, \{\F_t\}_{t\ge }, \P)$. Consider the distribution dependent SDEs
\beq\label{DDSDE} \beg{split} &\d X_t= b(t,X_t,\L_{X_t})\d t +\ss{2 a(t,X_t,\L_{X_t})}\d W_t,   \\
 & \d \bar X_t= \bar b(t,\bar X_t,\L_{\bar X_t})\d t +\ss{2 \bar a(t,\bar X_t,\L_{\bar X_t})}\d W_t,\end{split}\end{equation} 
  where $\L_{\xi}$ denotes the distribution of a random variable $\xi$.
We call these SDEs well-posed if for any $s\ge 0$ and any $\F_s$-measurable random variables $X_s$ and $\bar X_s$ with $\E [|X_s|^2+|\bar X_s|^2]<\infty,$
they have unique solutions with $(\L_{X_t})_{t\ge s}, (\L_{\bar X_t})_{t\ge s}\in C([s,\infty);\scr P_2).$ In this case, we denote
$$P_{s,t}^*\mu:=\L_{X_t},\  \bar P_{s,t}^*\mu:=\L_{\bar X_t}, ~\L_{X_s}=\L_{\bar X_s}=\mu\in \scr P_2,\ \ t\ge s.$$  
By It\^o's formula, $(\mu_t:= P_{s,t}^*\mu, \bar\mu_t:= \bar P_{s,t}^*\mu)$ solves \eqref{NFP}. On the other hand, by the superposition principle, see \cite{BR}, if $(\mu_t,\bar\mu_t)$ solves \eqref{NFP} with
$$\int_s^t \mu_r\big(|b(r,\cdot,\mu_r)|+\|a(r,\cdot,\mu_r)\|\big)\d r+ \int_s^t \bar\mu_r\big(|\bar b(r,\cdot,\mu_r)|+\|\bar a(r,\cdot,\bar \mu_r)\|\big)\d r<\infty,\ \ t\ge s,$$
then $\mu_t= P_{s,t}^*\mu$ and $\bar\mu_t=\bar P_{s,t}^*\mu, t\ge 0$.    Thus, to investigate the nonlinear Fokker-Planck equations \eqref{NFP} is equivalent to study the solutions of 
the distribution dependent SDEs \eqref{DDSDE}. 

Unlike in  the setting of standard Markov processes or linear Fokker-Planck equations,  the distribution $\L_{(X_t)_{t\ge s}}$   
on the path space is no longer determined by its time-marginals $P_{s,t}^*\mu$, so we will also consider the order  preservation and positive correlations for  
 $$   \LL_s\mu:= \L_{(X_t)_{t\ge s}},\ \  \ \bar \LL_s\mu=\L_{(\bar X_t)_{t\ge s}},\ \ \ s\ge 0,\mu\in \scr P_2.$$

In Section 2, we state our main results on the order preservation for $(P_{s,t}^*, \bar P_{s,t}^*)$ and $(\LL_s, \bar\LL_s)$, as well as on the positive correlations for $\LL_s$.
To prove these results, In Section  3  we extend Theorems \ref{CW1} and \ref{CW2} to the time inhomogeneous setting which are also new in the literature. 
Finally, the main results are proved in Section 4.

\section{Main results} 

To ensure the well-posedness and to apply the superposition principle, we make the following assumption. 

 \beg{enumerate}
\item[({\bf A})] $b,\bar b, a, \bar a$ are continuous on $[0,\infty)\times\R^d\times\scr P_2$, $a$ and $\bar a$ are positive definite,   
$$ \mu\big(|b(t,\cdot,\mu)|+\|a(t,\cdot,\mu)\|\big):=\int_{\R^d} \big(|b(t,\cdot,\mu)|+\|a(t,\cdot,\mu)\|\big)\d\mu$$ is locally bounded in $(t,\mu)\in [0,\infty)\times \scr P_2$, and 
 there exits an increasing function $K:[0,\infty)\to [0, \infty)$ such that $b, \bar b, \si:=\ss{2a}$ and $\bar\si:=\ss{2\bar a}$ satisfy 
\beq\label{A1}
\begin{split}
\max\Big\{&2\<b(t,x,\mu)-b(t, y,\nu), x-y\>+\|\si(t,x,\mu)-\si(t,x,\nu)\|_{HS}^2,\\
&2\<\bar b(t,x,\mu)-\bar b(t, y,\nu), x-y\>+\|\bar \si(t,x,\mu)-\bar \si(t,x,\nu)\|_{HS}^2\Big\}\\
&\leq K(t)(|x-y|^2+\W_2(\mu,\nu)^2), ~~~t\ge 0,~~x,y\in \R^d,~~\mu,\nu \in \scr P_2.
\end{split}
\end{equation}
\end{enumerate}
  According to \cite{Wang1} and the superposition principle in \cite{BR}, {\bf (A)} implies the well-posedness of \eqref{DDSDE}  and  \eqref{NFP}, and 
for any $s\ge0, \mu\in \scr P_2$, 
$$\mu_t:=P_{s,t}^*\mu,\ \ \ \bar \mu_t:= \bar P_{s,t}^*\mu,\ \ t\ge s$$ give the unique solution of \eqref{NFP}. 

To state the main results, we first define the order preservation  and positive correlations in the present setting.
 For any 
 $$\xi,\eta\in C_s:=C([s,\infty); \R^d), $$ we denote $\xi\le \eta$ if $\xi_t\le \eta_t$ for all $t\ge s.$ For any two probability measures $\Phi_1,\Phi_2$ on the path space $C_s$, we denote
  $\Phi_1 \preceq \Phi_2$ if $\Phi_1(F)\le \Phi_2(F)$ holds for any bounded increasing function $F$ on $C_s$. 
  Similarly, let $\scr P_+^s$ denote the set of probability measures on $C_s$ satisfying the FKG inequality for bounded increasing functions on $C_s$. 
  
\beg{defn} Let $t\ge s\ge 0.$
\beg{enumerate} \item[(1)] We write $\bar P_{s,t}\preceq P_{s,t}^*$, if $\bar P_{s,t}^*\mu \preceq P_{s,t}^*\nu$ holds for any   $\mu,\nu\in \scr P_2$ with $\mu\preceq \nu.$
\item[(2)] We write $\bar \LL_s\preceq \LL_s$, if $\bar \LL_s\mu\preceq \LL_s\nu$ holds for any  $\mu,\nu\in \scr P_2$ with $\mu\preceq \nu.$ 
\item[(3)] We write $P_{s,t}^*\in \scr P_+$ if $P_{s,t}^*\scr P_+\subset \scr P_+$;  and $\LL_s \in \scr P_+^s$ if $\LL_s\mu\in \scr P_+^s$  holds for all $\mu\in \scr P_+.$ 
\end{enumerate}
\end{defn} 
Obviously, $\bar \LL_s \preceq \LL_s$  for all $s\ge 0$ implies $\bar P_{s,t}\preceq P_{s,t}^*$ for all $t\ge s\ge 0$, but the inverse may not true in the nonlinear setting. Similarly, 
$\LL_s\in \scr P_+^s$ implies $P_{s,t}^*\in \scr P_+$ for any $t\ge s$ but the inverse may not be true. 

\subsection{Order preservation} 

The following result provides sufficient conditions for the order preservation. 

\beg{thm}\label{T1} Assume {\bf (A)} and the following two conditions:
\beg{enumerate} \item[$(1)$]   For any $1\le i\le d$ and $s\ge 0$,  $\bar b_i(s, x, \nu) \leq b_i(s,y, \mu)$ holds for $x\preceq y $  with  $ x_i=y_i$ and $ \nu\preceq \mu;$
\item[$(2)$]  $a=\bar a$,     and for any $1\le i,j\le d, s\ge 0$ and $\mu\in\scr P_2$, $ a_{ij}(s,x,\mu)$ depends only on $x_i$ and $x_j.$
\end{enumerate}
Then $\bar \Lambda_s \preceq \Lambda_s$ for all $s\ge 0$. Consequently,   $\bar P^*_{s,t}\preceq P^*_{s,t}$ for $t\geq s.$
 \end{thm}    

The next two results include necessary conditions for the order preservation, which are weaker than the sufficient ones given in Theorem \ref{T1}. 
However, they coincide with the sufficient conditions and hence become sufficient and necessary conditions when $b(t,x,\mu)$ and $a(t,x,\mu)$ do not depend on $\mu$, and hence our first three results recover Theorems \ref{CW1} and \ref{CW2}. 

For any $\mu\in \scr P$ and $I\subset \{1,\cdots,d\},$  let 
$$\mu_I(A):= \mu(\{x\in \R^d: x_I\in A\}),\ \ A\in \B(\R^{\# I})$$   be  the marginal distribution of $\mu$ with respect to   components indexed by  $I$, where $\# I$ denotes the number of elements in $I$.  In particular, we simply denote $\mu_i=\mu_{\{i\}}$ 

\beg{thm}\label{T2}  
If  $\bar \Lambda_s\preceq  \Lambda_s$ for all $s\geq 0$, then the following conditions hold:
\beg{enumerate}
\item[$(i)$] for any   $\nu \preceq \mu$ with $\nu_i=\mu_i,$ $1\leq i \leq d$, there exists a coupling $\pi \in \C(\nu,\mu)$ with $\pi \big(\{x\leq y\}\big)=1$  such that $$\bar b_i(s, x, \nu) \leq b_i(s,y, \mu),~~~s\ge 0,~(x,y)\in \supp\pi.$$
Consequently,  $\bar b_i(s,x,\mu) \leq b_i(s,x,\mu) $  for $s\ge0,x\in\R^d,\mu\in\scr P_2.$
\item[$(ii)$] for any $ \nu\preceq \mu$ with $\nu_{ij}=\mu_{ij},1\leq i,j \leq d,$ there exists  a coupling $\pi \in \C(\nu,\mu)$ with $\pi \big(\{x\leq y\}\big)=1$  such that $$\bar a_{ij}(s, x, \nu)=a_{ij}(s,y,\mu),~s\geq0, ~(x,y)\in \supp\pi.$$
Consequently,   $a(s,x,\mu)=\bar a(s,x,\mu)$ for any $s\ge0,x\in\R^d,\mu\in\scr P_2.$
\end{enumerate}
\end{thm}

Since $\bar \Lambda_s\preceq   \Lambda_s$ implies $\bar P_{s,t}^* \preceq P_{s,t}^*$ for $t\ge s$, conditions in the following result are also necessary for $\bar \Lambda_s\preceq   \Lambda_s$.

\begin{thm}\label{T3}  
If $\bar P_{s,t}^* \preceq  P_{s,t}^*~\mbox{for}~t\geq s\ge 0,$ then the following conditions hold:
\beg{enumerate}
\item[$(i)$] For any $s\ge 0$ and $1\le i\le d$, $\nu(\bar b_i(s,\cdot, \nu)) \leq \mu(b_i(s,\cdot, \mu))$ holds for $\nu\preceq \mu$  with $\nu_i=\mu_i.$
\item[$(ii)$] For any $s\ge 0$ and $1\le i,j\le d$, $\bar a_{ij}(s, x, \delta_x)=a_{ij}(s, x, \delta_x)$ holds and $a_{ij}(x, \delta_x)$ depends only on $x_i$ and $x_j.$
\end{enumerate}
\end{thm}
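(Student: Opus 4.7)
The plan is to derive each conclusion from a small-time expansion of the weak Fokker--Planck identity \eqref{NFP} tested against suitable increasing functions and specialized to concrete initial data. For part $(i)$, given $\nu\preceq\mu$ in $\scr P_2$ with $\nu_i=\mu_i$, I apply $\bar P^*_{s,t}\nu\preceq P^*_{s,t}\mu$ to the coordinate $f(y)=y_i$, which is increasing. Since $f$ has linear growth one first extends the weak FP identity beyond $C_0^\infty$ via bounded smooth increasing truncations agreeing with $t$ on $[-R,R]$ and passes $R\to\infty$ using the uniform moment control provided by {\bf (A)}. This gives
\[
\nu(y_i)+\int_s^t\bar P^*_{s,r}\nu\bigl(\bar b_i(r,\cdot,\bar P^*_{s,r}\nu)\bigr)\,\d r\le\mu(y_i)+\int_s^t P^*_{s,r}\mu\bigl(b_i(r,\cdot,P^*_{s,r}\mu)\bigr)\,\d r.
\]
The identity $\nu_i=\mu_i$ makes the boundary terms coincide; dividing by $t-s$ and letting $t\downarrow s$, the $\scr P_2$-continuity of the two flows and the joint continuity of $b_i,\bar b_i$ yield the asserted inequality at $r=s$.

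For part $(ii)$, I specialize to Dirac initial data $\mu=\nu=\delta_x$. Applying $\bar P^*_{s,t}\delta_x\preceq P^*_{s,t}\delta_x$ to $f(y)=\phi(y_i)$ with $\phi$ bounded, smooth, and increasing, subtracting the FP identities and dividing by $t-s$ as $t\downarrow s$ gives
\[
[\bar a_{ii}(s,x,\delta_x)-a_{ii}(s,x,\delta_x)]\phi''(x_i)+[\bar b_i(s,x,\delta_x)-b_i(s,x,\delta_x)]\phi'(x_i)\le 0,
\]
whose drift bracket is nonpositive by $(i)$. Realizing bounded smooth increasing profiles with $\phi'(x_i)$ tending to $0$ and $\phi''(x_i)$ of either sign via shifted normal-CDF profiles $\phi(t)=\epsilon\ss{2\pi}\Phi(t-x_i-\delta)$, and letting $\epsilon\downarrow 0$ while varying the sign of $\delta$, forces the $a_{ii}$-equality at $(s,x,\delta_x)$. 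Replacing $f$ by $f(y)=\phi(y_i+y_j)$ collapses the Hessian contribution to $\phi''(x_i+x_j)[a_{ii}+2a_{ij}+a_{jj}]$; with the diagonal equality in hand, the same trick isolates $\bar a_{ij}(s,x,\delta_x)=a_{ij}(s,x,\delta_x)$. For the final claim that $a_{ij}(s,x,\delta_x)$ depends only on $x_i,x_j$, I repeat the argument for the ordered pair $\nu=\delta_x\preceq\delta_y=\mu$ with $x\le y$, $x_i=y_i$, $x_j=y_j$: testing with $f(z)=\phi(z_i+z_j)$ one has $f(x)=f(y)$, so the limit yields $\bar L_{s,\delta_x}f(x)\le L_{s,\delta_y}f(y)$. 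Combining with $(i)$, the equality $\bar a=a$ on Diracs, and the sign-flexibility of $\phi''$ produces equality of $a_{ii}+2a_{ij}+a_{jj}$ at $(x,\delta_x)$ and $(y,\delta_y)$; repeating with $f(z)=\phi(z_i)$ and $f(z)=\phi(z_j)$ yields the diagonal equalities individually, isolating $a_{ij}(s,x,\delta_x)=a_{ij}(s,y,\delta_y)$, and a standard lattice argument using the coordinate-wise maximum $z=x\vee x'$ removes the monotonicity restriction.

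The principal technical obstacle is rigorously justifying the small-time limit $\frac{1}{t-s}\int_s^t P^*_{s,r}\mu(L_{r,P^*_{s,r}\mu}f)\,\d r\to\mu(L_{s,\mu}f)$ as $t\downarrow s$, which rests on the $\scr P_2$-continuity of the distribution flow and the joint continuity of the coefficients in $(r,x,\mu)$, and on extending \eqref{NFP} from $C_0^\infty$ to the truncations of $y_i$ in part $(i)$ and to bounded smooth increasing profiles with prescribed first and second derivatives at a single point in part $(ii)$.
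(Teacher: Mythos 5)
Your overall strategy matches the paper's: for $(i)$ you test the marginal comparison against the coordinate function $y\mapsto y_i$, cancel the boundary terms using $\nu_i=\mu_i$, and pass $t\downarrow s$; for $(ii)$ you specialize to Diracs and take the small-time generator limit to get $\bar L_{s,\delta_x}f(x)\le L_{s,\delta_x}f(x)$ for increasing $f$. The one presentational difference is that the paper works through the SDE representation (taking expectations of the stochastic integral equations, so the moment control needed for the unbounded test function $y_i$ is already built into {\bf (A)}), while you work through the weak Fokker--Planck identity with truncations; these are equivalent under the superposition principle. For $(ii)$ the paper stops at the generator inequality at Diracs and cites \cite[Lemma 3.4]{Wang} for the extraction of the $a_{ij}$-identities; you reproduce that extraction explicitly via normal-CDF profiles, the sum test $\phi(z_i+z_j)$, the cross-Dirac comparison $\delta_x\preceq\delta_y$ with $x_i=y_i$, $x_j=y_j$, and the lattice closure via $x\vee x'$, which is the standard argument and is correct.

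One imprecision worth flagging: with $\phi(t)=\ee\ss{2\pi}\Phi(t-x_i-\dd)$ one has $\phi'(x_i)=\ee\,\e^{-\dd^2/2}$ and $\phi''(x_i)=\ee\,\dd\,\e^{-\dd^2/2}$, so both scale together in $\ee$ and ``letting $\ee\downarrow 0$'' alone kills both terms at the same rate and isolates nothing. What actually forces the $a$-equality is that the \emph{ratio} $\phi'(x_i)/\phi''(x_i)=1/\dd\to 0$ as $|\dd|\to\infty$ (or, equivalently, renormalize $\ee=\e^{\dd^2/2}/\dd$ so that $\phi''(x_i)=1$ and $\phi'(x_i)=1/\dd\to 0$). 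Dividing the inequality
$[\bar a_{ii}-a_{ii}]\phi''(x_i)+[\bar b_i-b_i]\phi'(x_i)\le 0$
by $|\phi''(x_i)|$ and sending $\dd\to+\infty$ and $\dd\to-\infty$ then yields the two one-sided inequalities. With this small fix the argument is complete and correct.
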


\subsection{Positive correlations} 
We first present sufficient conditions for the positive correlations. 
 \begin{thm}\label{T4}
Assume {\bf (A)} and suppose further 
\beg{enumerate}
\item[$(1)$] For any $s\ge 0$ and $1\le i\le d$,  $b_i(s,x, \nu)\leq b_i(s,y, \mu),~ \nu\preceq \mu, x\leq y~\mbox{with}~x_i=y_i.$

\item[$(2)$]  For any $1\le i,j\le d$, $a_{ij}\geq 0$, and  for  any  $\mu \in \scr P_+,$   $a_{ij}(s,x,\mu)$ depends only on $x_i$ and $x_j.$
\end{enumerate}
Then $\Lambda_s \in \scr P_+^s.$ Consequently, $P^*_{s,t}\in \scr P_+$ for any $t\geq s\geq 0. $  
\end{thm}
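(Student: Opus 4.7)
The plan is to reduce the nonlinear problem to a linear (but time-inhomogeneous) one by decoupling, and then invoke the time-inhomogeneous extensions of Theorem \ref{CW1} to be established in Section~3. Fix $s\ge 0$ and $\mu\in\scr P_+\cap\scr P_2$, and let $(X_t)_{t\ge s}$ be the unique solution of the first SDE in \eqref{DDSDE} with $\L_{X_s}=\mu$; set $\mu_t:=\L_{X_t}$. Then $(X_t)_{t\ge s}$ also solves the decoupled linear SDE
\[
\d X_t=\widetilde b(t,X_t)\d t+\sqrt{2\widetilde a(t,X_t)}\d W_t,\quad \widetilde b(t,x):=b(t,x,\mu_t),\ \widetilde a(t,x):=a(t,x,\mu_t),
\]
whose coefficients no longer depend on the distribution argument. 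If we knew that $\mu_t\in\scr P_+$ for all $t\ge s$, then condition $(2)$ would force $\widetilde a_{ij}(t,x)\ge 0$ and $\widetilde a_{ij}(t,x)$ to depend only on $(x_i,x_j)$, while condition $(1)$ applied with $\nu=\mu=\mu_t$ would give the monotonicity of $\widetilde b_i(t,\cdot)$ in each coordinate $x_j$, $j\ne i$. The path-space, time-inhomogeneous version of Theorem \ref{CW1} from Section~3 would then yield $\L_{(X_t)_{t\ge s}}\in\scr P_+^s$, i.e.\ $\Lambda_s\mu\in\scr P_+^s$.

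The main obstacle is therefore to justify $\mu_t\in\scr P_+$ for every $t\ge s$, which is essentially the conclusion we seek. I would handle this by a Picard-type bootstrap consistent with the well-posedness proof of \eqref{DDSDE} under $\bf (A)$. Set $\mu^{(0)}_\cdot\equiv\mu$, and for $n\ge 0$ let $X^{(n+1)}$ be the strong solution of
\[
\d X^{(n+1)}_t=b(t,X^{(n+1)}_t,\mu^{(n)}_t)\d t+\sqrt{2a(t,X^{(n+1)}_t,\mu^{(n)}_t)}\d W_t,\quad \L_{X^{(n+1)}_s}=\mu,
\]
and put $\mu^{(n+1)}_t:=\L_{X^{(n+1)}_t}$. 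A standard Gronwall argument based on \eqref{A1} yields $\sup_{t\in[s,T]}\W_2(\mu^{(n)}_t,\mu_t)\to 0$ as $n\to\infty$ for every $T>s$, in particular $\mu^{(n)}_t\to\mu_t$ weakly. Inductively, if $\mu^{(n)}_t\in\scr P_+$ for all $t\ge s$ (true at $n=0$ by hypothesis on $\mu$), then the coefficients of the linear SDE at step $n+1$ satisfy the hypotheses of the time-inhomogeneous Theorem \ref{CW1}, so $\L_{(X^{(n+1)}_t)_{t\ge s}}\in\scr P_+^s$, whence $\mu^{(n+1)}_t\in\scr P_+$ for every $t\ge s$. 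Since the inequality $\nu(fg)\ge\nu(f)\nu(g)$ is stable under weak convergence when $f,g$ are bounded continuous increasing, and this subclass of test functions characterizes $\scr P_+$ (as noted after the definition of $\scr P_+$), the limit $\mu_t\in\scr P_+$ for every $t\ge s$.

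Combining the two paragraphs yields $\Lambda_s\mu\in\scr P_+^s$. The marginal assertion $P_{s,t}^*\in\scr P_+$ is then immediate: for any $f,g\in\scr U_b$, the cylinder functions $F(\xi):=f(\xi_t)$ and $G(\xi):=g(\xi_t)$ are bounded and increasing on $C_s$, so
\[
(P_{s,t}^*\mu)(fg)=\Lambda_s\mu(FG)\ge\Lambda_s\mu(F)\Lambda_s\mu(G)=(P_{s,t}^*\mu)(f)\cdot(P_{s,t}^*\mu)(g),
\]
which completes the argument.
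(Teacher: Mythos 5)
Your proposal is correct and follows essentially the same strategy as the paper: decouple the McKean--Vlasov dynamics into a time-inhomogeneous linear SDE with the marginal flow frozen, bootstrap the preservation of $\scr P_+$ along the marginals by a Picard/fixed-point argument, and then invoke the time-inhomogeneous extension of Theorem~\ref{CW1} (Theorem~\ref{T1.6}) to conclude $\Lambda_s\mu\in\scr P_+^s$. The only presentational difference is that the paper packages the bootstrap as a Banach fixed-point argument on the complete metric space $\scr D_+$ of $\scr P_+$-valued paths under a weighted $\W_2$ metric, whereas you run an explicit inductive Picard iteration and pass to the weak limit; the closedness of $\scr P_+$ under weak convergence that you cite is precisely what makes $\scr D_+$ complete, so the two formulations coincide.
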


 Similarly to the above results on the order preservation, necessary conditions for positive correlations presented in the next result are weaker than the above sufficient ones, but they coincide 
and hence become necessary and sufficient conditions  when $b$ and $a$ do not depend on the distribution.
 
 \beg{thm}\label{T5}  If  $  \Lambda_s^{\mu} \in \scr P_+^s$ for  $s\ge0$ and  $\mu=\mu_{\{ij\}}\times\mu_{\{ij\}^c}\in\scr P_+$, then  the following assertions hold:
\beg{enumerate}
\item[$(1)$]  For any $s\ge 0$, $1\le i,j\le d$, $a_{ij}(s,x,\mu)$ depends only on $x_i$ and $x_j.$
\item[$(2)$] For any $s\ge 0$, $1\le i\le d$ and $f \in \scr U_b$    independent on $x_i,$
$$\mu(b_i(s,\cdot, \mu) f) \geq \mu(f)\mu(b_i(s,\cdot,\mu)).$$
\end{enumerate}
\end{thm}

\section{Time-inhomogeneous  diffusion processes}

Consider the time-dependent second order diffusion operators: for $t\geq 0$ and $  x\in\R^d$,
\begin{equation}\label{OP}
\begin{split}
L_t:&=\frac{1}{2}\sum_{i,j=1}^d a_{ij}(t,x)\pp_i\pp_j+\sum_{i=1}^d b_i(t,x)\pp_i, \\
\bar L_t:&=\frac{1}{2}\sum_{i,j=1}^d \bar a_{ij}(t,x)\pp_i\pp_j+\sum_{i=1}^d\bar b_i(t,x)\pp_i,
\end{split}
\end{equation}
where $a_{ij},\bar a_{ij},b_i,\bar b_i\in C([0,\infty)\times\R^d)$. Assume that the  martingale problems associated with $(L_t)_{t\ge0}$ and  $(\bar L_t)_{t\ge0}$  are well-posed
so that there exist  unique time-inhomogeneous diffusion processes $(X_{s,t})_{t\ge s\ge0}$ and  $(\bar X_{s,t})_{t\ge s\ge0}$  corresponding to $(L_t)_{t\ge0}$ and  $(\bar L_t)_{t\ge0}$, respectively.
Let $(P_{s,t})_{t\geq s\ge0}$  and $(\bar P_{s,t})_{t\geq s\ge0}$ be the  Markov semigroups generated by $(X_{s,t}^x)_{\{ij\}}\times\mu_{\{ij\}^c}$ and  $(\bar X_{s,t}^x)_{\{ij\}}\times\mu_{\{ij\}^c}$ with the initial value  $X_{s,s}=\bar X_{s,s}=x$, respectively, i.e.,
\begin{equation}\label{semi}
P_{s,t}f(x)=\E f(X_{s,t}^x),\, \bar P_{s,t}f(x)=\E f(\bar X_{s,t}^x), \, \quad f\in \scr B_b(\R^d).
\end{equation}
It is well known that for any $f \in C_0^\infty(\R^d)$ %%Then
\begin{equation}\label{PP}
\frac{\d}{\d s} P_{s,t}f(x)= -P_{s,t} L_sf,~~~~~\frac{\d}{\d t} P_{s,t}f(x)=  L_tP_{s,t}f,\quad t\ge s\ge0. %% t\ge s.
\end{equation}
 For any $x,y\in\R^d$ with $x\leq y $, $f \in \scr U_b$ and $t\ge s\ge0$, if $\bar P_{s,t}f(x)\leq P_{s,t}f(y),$  we call $P_{s,t}$  preserving order,  written as $\bar P_{s,t}^*\preceq P_{s,t}^*,$
 where for any $\mu\in \scr P$, $P_{s,t}^*\mu, \bar P_{s,t}^*\mu\in \scr P$ is given by 
 $$(P_{s,t}^*\mu)(f):= \mu(P_{s,t}f),\ \ (\bar P_{s,t}^*\mu)(f):= \mu(\bar P_{s,t}f),\ \   f\in \B_b(\R^d).$$
 Moreover, we denote $P_{s,t}^*\in\scr P_+$ if $P_{s,t}^*\scr P_+\subset \scr P_+$. 

For any $\mu\in \scr P$, let $\LL_s\mu$ and $\bar \LL_s\mu$ be the distributions  of  the  processes starting at $\mu$ from time $s$ generated by $L$ and $\bar L$ respectively.
By the standard Markov property we see that $\bar P_{s,t}^*\preceq  P_{s,t}^*$ for $t\ge s\ge 0$ if and only if $\bar\LL_s\preceq \LL_s$ for $s\ge 0$, while $P_{s,t}^*\in \scr P_+$ for $t\ge s$ is equivalent to $\LL_s\in \scr P_+^s.$ 

 \subsection{Main results} 
 
The following two results extend  Theorems \ref{CW1} and  \ref{CW2} to the present time-inhomogeneous setting. 

 \beg{thm}\label{T1.3}   If $\bar P_{s,t}^* \preceq  P_{s,t}^* $ for $ t\geq s\ge0$, equivalently $\bar \LL_s\preceq \LL_s$ for $s\ge 0$,  if and only if the following conditions hold: 
  \beg{enumerate}
 \item[$({\bf 1})$] For any $s\ge 0$ and  $1\le i\le d$,  
  $\bar b_i(s,x)\leq b_i(s,y) $ with $x\leq y$ and $ x_i=y_i.$
\item[$({\bf 2}$)]  For any $s\ge 0$ and  $1\le i,j\le d$,  $ \bar a_{ij}=a_{ij}$ and $a_{ij}(s,x)$ only depends on $x_i$ and $x_j$.
\end{enumerate}
\end{thm}

\beg{thm}\label{T1.6} $P_{s,t}^*\in \scr P_+$ for $t\ge s$, equivalently $\LL_s\in \scr P_+^s$ for $s\ge 0$, if and only if the following conditions hold:
 \beg{enumerate}
 \item[$(1)$] For any $s\ge 0$ and $1\le i\le d$,  $b_i(s, x)\leq b_i(s, y) $  with   $x\leq y$ and  $x_i=y_i; $
\item[$(2)$]  For any $s\ge 0$ and $1\le i\le d$, $a_{ij}\ge 0$ and $ a_{ij}(s,x)\ge 0$ depends only on $x_i$ and $x_j.$
\end{enumerate}
\end{thm}

\subsection{Proofs} 

\beg{proof}[Proof of Theorem \ref{T1.3}]  (a) We first prove the  necessity. 
 For any $t\ge s\ge0$ and $x\in\R^d$,  let $\Lambda _{s}^x$ (resp. $\bar \Lambda _{s}^x$) be the distribution of the $L_t$-diffusion (resp. $\bar L_t$-diffusion) process on the path space $C_s:=C([s,\infty);\R^d)$ starting from $x$ at time $s.$

For   $x\in\R^d$ and $0\le s_0\leq s_1< s_2<\cdots <s_n$,  let $\Lambda^x_{s_0, s_1,\cdots, s_n}$ be the marginal distribution of $\Lambda_{s_0}^x$ at the time sequence $(s_1,\cdots, s_n)$, which   can be expressed via the Markov property as below
$$\Lambda^x_{s_0, s_1,\cdots, s_n}(\d y_1,\d y_2,\cdots,\d y_n)=P_{s_0,s_1}(x,\d y_1)P_{s_1,s_2}(y_1,\d y_2)\cdots P_{s_{n-1},s_n}(y_{n-1},\d y_n).$$
Then, by an inductive argument, together with the Markov property of the associated Markov process,   $\bar P_{s,t}^*\preceq P_{s,t}^*$ implies $\bar \Lambda _{s}^x\leq \Lambda _{s}^y$ (i.e., $\bar \Lambda_{s }^x(f)\leq  \Lambda_{s }^y(f)$ for any $f \in \scr U_b\cap C_s$. Therefore,
there exists a coupling $\P_{s }^{x,y} \in \C (\bar \Lambda^x_{s }, \Lambda^y_{s })$ such that
\begin{equation}\label{D}
\P_{s }^{x,y} \big( (\xi,\eta)\in C_{s }\times C_{s }: \eta \preceq \xi \big)=1.
\end{equation}
Let $(\Omega,\F,\P)=(C_{s }\times C_{s },\B(C_{s }\times C_{s} ),\P_{s }^{x,y})$ with the natural filtration $(\F_t)_{t\geq s }$ induced by the coordinate process $(\xi_t,\eta_t)_{t\geq s } $  solving
\begin{equation}\label{coor}
  \begin{cases}
   \d \xi_t=b(t,\xi_t)\d t+\si(t, \xi_t) \d B_t^1,\quad      & \xi_{s}=y\\
    d \eta_t=\bar b(t,\eta_t)\d t+\bar \si(t, \eta_t) \d B_t^2,\quad     &  \eta_{s}=x
  \end{cases}
\end{equation}
for some $d$-dimensional Brownian motions $(B_t^1)_{t\geq s}$ and $(B_t^2)_{t\geq s},$ and some measurable mappings $\si, \bar \si: [s,\infty)\times \R^d \rightarrow \R^d \otimes \R^d$ with
 $a=\si \si^*,~\bar a=\bar \si\, \bar \si^*$. Then, from \eqref{D}, we have
$\xi_t\geq \eta_t$,  $\P_{s }^{x,y}$-a.s., for all $t\ge s$.

 Let $x\leq y$ with $x_i=y_i$. Since    $\xi_t\geq \eta_t$, $\P_{s}^{x,y}$-a.s., and  $(\xi_s)_i=(\eta_s)_i$ due to $x_i=y_i$, we derive from \eqref{coor} that
$$\int^t_{s} \big(b_i(r,\xi_r)-\bar b_i(r,\eta_r)\big)\d r\geq \int^t_{s} \big\<\bar \si_{i\cdot}(r,\xi_r), \d B_r^2\big\>-\int^t_{s} \big\<\si_{i\cdot}(r,\xi_r), \d B_r^1\big\>,$$
where $\si_{i\cdot}$ means the $i$-th row of $\si.$
Taking conditional expectation $\P_{s_0}^{x,y}(\cdot| \F_{s_0})$ on both sides yields
$$\int^t_{s } \E \big(\big(b_i(r,\xi_r)-\bar b_i(r,\eta_r)\big)|\F_{s }\big)\d r\geq 0,~~~t\geq s.$$
This implies  the assertion ({\bf 1}) by taking the continuity of
$b_i,~\bar b_i$ and $(\xi_\cdot,~\eta_\cdot)$ into account.

Let $x\leq y$  with $(x_i,x_j)=(y_i, y_j)$. Then, by using  $\xi_t\geq \eta_t, \P_{s_0}^{x,y}$-a.s., again,  we have
\begin{equation}\label{coor1}
   \int_{s_0}^t b_k(s,\xi_s)\d s +\int_{s_0}^t \big\<\si_{k\cdot}(s,\xi_s), \d B_s^1\big\>\geq    \int_{s_0}^t \bar b_k(s,\eta_s)\d s +\int_{s_0}^t \big\<\bar \si_{k\cdot}(s,\eta_s), \d B_s^2\big\>,\, k=i,j.
\end{equation}
Note that as $t\downarrow s_0,$
$$\frac{1}{\sqrt{t-s_0}}\Big( \int_{s_0}^t\big\<\si_{i\cdot}(s,\xi_s), \d B_s^1\big\>, \int_{s_0}^t\big\<\si_{j\cdot}(s,\xi_s), \d B_s^1\big\> \Big)\overrightarrow{weakly} ~~N\left(
      0,
      \left(\begin{array}{c}
      a_{ii}(s, y)~~  a_{ij}(s,y) \\
      a_{ji}(s, y)~~  a_{jj}(s,y)
    \end{array}\right)
   \right) =:\mu,$$
and that
$$\frac{1}{\sqrt{t-s_0}}\Big( \int_{s_0}^t\big\<\bar \si_{i\cdot}(s,\xi_s), \d B_s^2\big\>, \int_{s_0}^t\big\<\bar \si_{j\cdot}(s,\xi_s), \d B_s^1\big\> \Big)\overrightarrow{weakly} ~~N\left(
      0,
     \left( \begin{array}{c}
      \bar a_{ii}(s, y)~~  \bar a_{ij}(s,y) \\
      \bar a_{ji}(s, y)~~  \bar a_{jj}(s,y)
    \end{array}\right)
   \right) =:\bar \mu.$$
Then \eqref{coor1} implies $\bar \mu\preceq  \mu$. On the other hand, by the symmetry of  $\mu$ and $\bar \mu$ due to the symmetry of $a$ and $\bar a$,   then $\bar \mu\preceq \mu $ implies  $\bar \mu \preceq \mu.$ Therefore, we have $\mu=\bar \mu $ so that  $a=\bar a$.  For the assertion that $a_{ij}$ depends only on $x_i$ and $x_j$ of 
({\bf 2}),
it can be  available by following exactly the arguments of \cite[Lemmas 2.1 \& Lemma 2.3]{Wang}.

(b) Following exactly the arguments of \cite[Lemmas 2.4,  2.5 \& Theorem 1.3]{Wang} by replacing  time homogeneous semi-group  $P_t$ by time inhomogeneous semi-group  $P_{s,t}$, we prove the sufficiency  by the following Theorem \ref{T1.4} on the monotonicity.  \end{proof}

\beg{thm}\label{T1.4}    $ P_{s,t}^*$ is monotone, i.e., $P_{s,t}^*\preceq P_{s,t}^*$ for $t\ge s\ge 0$,  provided the following two conditions hold: 
\beg{enumerate}
 \item[$({\bf1'})$] $b_i(s, x)\leq b_i(s, y) $  with   $x\preceq y$ and  $x_i=y_i; $
\item[$({\bf2'})$]  $ a_{ij}(s,x)$ depends only on $x_i$ and $x_j.$
\end{enumerate}
\end{thm}

\begin{proof}
To get $P_{s,t}f \in \scr U_b$ for $t\geq s$ and $f\in \scr U_b$,
it suffices to show $$\nabla P_{s,t} f(x)\geq 0,\quad  t\geq s, \, f\in \scr U_b \cap C_b^{\infty}(\R^d)$$
since $\scr U_b \cap C_b^{\infty}(\R^d)$ is dense in $\scr U_b$.  Below, we assume $f\in \scr U_b \cap C_b^{\infty}(\R^d)$. Let $u_{s,t}=P_{s,t} f,~t\geq s.$ Then by \eqref{PP}, we have
$$\pp_t u_{s,t}=L_t u_{s,t},~~~t\geq s,~~~~~u_{s,s}=f.$$
Taking the partial derivative w.r.t. the $k$-th component (i.e., $\pp_k$) on both sides yields
\beg{equation}\label{Ust}
\pp_t(\pp_k u_{s,t})=\pp_k\pp_t u_{s,t}=L_t^k(\pp u_{s,t})+\sum_{j=1}^d \alpha_{kj}(t,\cdot)\pp_j u_{s,t},
\end{equation}
where $$L_t^k:= A_t+\sum_{j=1}^d\big[(1-\frac{1}{2})\pp_k a_{jk}(t,\cdot)\big]\pp_j+\pp_k b_k(t,\cdot), \quad \alpha_{kj}(t,\cdot):=(\pp_kb_j(t,\cdot))I_{\{k\neq j\}}.$$
Since $L_t^k$ is a  time-inhomogeneous  Schr\"odinger operator, it generates a positivity-preserving semigroup $\big(T_{s,t}^k\big)_{t\geq s}.$ So, the operator $L_t:= \big(L_t^k\big)_{1\leq k \leq d}$ defined on $C^2\big(\R^d;\R^d\big)$ by $L_tV:=\big(L_t^k V_k\big)_{1\leq k \leq d}$ generates a  positivity preserving semigroup$$T_{s,t}:=\big(T_{s,t}^k\big)_{1\leq k \leq d}~,~~~~~t\geq s. $$
Let $D_r=\big(\alpha_{kj}(r,\cdot)_{1\leq k,j\leq d}\big)$ and $V_{s,t}=\nabla P_{s,t}f=\nabla u_{s,t}$. Then $\eqref{Ust}$ implies
\begin{equation*}\label{V}
\pp_t V_{s,t}=L_t V_{s,t}+D_t V_{s,t},~~~ t\geq s, ~~~V_{s,s}=\nabla f.
\end{equation*}
This, together with  Duhamel's formula,  gives
 $$V_{s,t}=T_{s,t}V_{s,s}+\int_s^t T_{r,t} D_r V_{s,r} \mbox{d}r,~~~t\geq s.$$
Thus,   we conclude that  $V_{s,t}=\nabla P_{s,t}f \geq 0$ since $V_{s,s}=\nabla f\geq 0$ due to $f\in \scr U_b$ and $T_{s,t},~D_r$ are positivity preserving.
\end{proof}

\beg{rem}
Different from the proof in \cite{Pitt} for the time-homogeneous setting, we adopt the Duhamel's formula instead of the Trotter product formula which is less explicit in the present setting.
\end{rem}

  \beg{proof}[Proof of Theorem \ref{T1.6}] 
Theorem \ref{T1.6} can be proved  using the same arguments in \cite[Proof of Proposition 4.1]{Pitt} by combining Theorem \ref{T1.3} and  \ref{T1.4}.  So, we omit the details to save space.   \end{proof}  

\section{Proofs of Theorems \ref{T1}, \ref{T2} and \ref{T3}}

%Consider the following  nonlinear Fokker-Planck equations
%\begin{equation}\label{NLFP1}
%\partial_t\mu_{t}=L_{t, \mu_t}^*\mu_{t}, \, t\ge s,  \mu_s=\mu; \quad \partial_t\nu_{t}=\bar L_{t, \nu_t}^*\nu_{t}, \, t\geq s,~\nu_s=\nu,
%\end{equation}
%where $L_{t, \mu_t}^*$  and  $L_{t, \nu_t}^*$ are respective adjoint operator of
%\begin{equation}\label{N2} L_{t, \mu_t}:=\sum_{ij}^d a_{ij}(t,x,\mu_t)\pp_i\pp_j+\sum_i^d b_i(t,x,\mu_t), \quad \bar L_{t, \nu_t}:=\sum_{ij}^d \bar a_{ij}(t,x,\nu_t)\pp_i\pp_j+\sum_i^d \bar b_i(t,x,\nu_t).  \end{equation}
% We assume that the martingale problems associated with $L_{t, \mu_t}$  and  $L_{t, \nu_t}$ are well-posed.
%Then,  according to \cite[Theorem 2.5]{TR},   we have $\mu_t=\L_{X_{s,t}}$ and $\nu_t=\L_{\bar X_{s,t}}$, where $ X_{s,t}$ and  $\bar X_{s,t}$ solve respectively the following DDSDEs
%\begin{equation}\label{PSDE11}
%\begin{split}
%\d X_{s,t}&=b(t,X_{s,t}, \L_{X_{s,t}}) \d t+\ss{2 a(t,X_{s,t}, \L_{X_{s,t}}) }\d B_t^1,\quad \L_{X_{s,s}}=\mu\\
%\d \bar X_{s,t}&=\bar b(t, \bar X_{s,t}, \L_{\bar X_{s,t}}) \d t+\ss{2 \bar a(t,\bar X_{s,t}, \L_{\bar X_{s,t}}) }\d B_t^2,\quad \L_{\bar X_{s,s}}=\nu,
%\end{split}
%\end{equation}
%where  $(B_t^1)_{t\ge0}$ and $(B_t^2)_{t\ge0}$ are $d$-dimensional Brownian motions.

\begin{proof}[Proof of Theorem \ref{T1}]
Note that $\bar P_{s,t}^* \nu$  and $ P_{s,t}^* \mu$ are marginal distribution at time $t$ of $\bar \Lambda_s\nu$ and $\Lambda_s\mu$, respectively, we infer that $\bar P^*_{s,t}\preceq P^*_{s,t}$ for $t\geq s $ once $\bar \Lambda_s \preceq \Lambda_s $ is available. Therefore,
to obtain the desired assertion, it is sufficient to show $\bar \Lambda_s \nu \preceq \Lambda_s \mu$. Below, we set $\mu, \nu \in \scr P_2$ with $\nu \preceq \mu$. For any $T>s$,
set
 $$\scr P^{\nu, \mu}_{s,T}:=\big\{ (\mu^{(1)},\mu^{(2)}) \in C([s,T]; \scr P_2\times \scr P_2):\mu_t^{(1)}\preceq \mu_t^{(2)},~t\in [s,T],~\mu_s^{(1)}=\nu, \mu_s^{(2)}=\mu\big\},$$
which is a complete metric space  under the metric for $\lambda>0$,
$$\rr_{\lambda}\big( (\mu^{(1)},\mu^{(2)}), (\tilde \mu^{(1)}, \tilde \mu^{(2)}) \big):=
\sup_{t\in [s,T]} \e^{-\lambda t} \big\{\W_2(\mu^{(1)}_t, \tilde \mu^{(1)}_t)+\W_2(\mu^{(2)}_t, \tilde \mu^{(2)}_t)\big\}.$$
For any $(\mu^{(1)},\mu^{(2)})\in \scr P_{s,T}^{\nu, \mu},$ consider the following time-dependent SDEs:  
\begin{equation}\label{ddsde1}
  \begin{cases}
   \d X_t^{(1),\mu^{(1)}}=\bar b(t, X_t^{(1),\mu^{(1)}}, \mu^{(1)}_t)\d t+ \bar \si(t, X_t^{(1),\mu^{(1)}}, \mu^{(1)}_t) \d W_t     & t\geq s,\quad X_s^{(1),\mu^{(1)}}=\xi\sim\nu,\\
    \d X_t^{(2),\mu^{(2)}}=b(t, X_t^{(2),\mu^{(2)}}, \mu^{(2)}_t)\d t+ \si(t, X_t^{(2),\mu^{(2)}}, \mu^{(2)}_t) \d W_t       &  t\geq s,\quad X_s^{(2),\mu^{(2)}}=\eta\sim\mu,
  \end{cases}
\end{equation} where   $\si=\ss{2a}$ and $\bar\si=\ss{2\bar a}$, and $\xi \sim\nu$ means $\L_{\xi}=\nu$.  
Define the mapping on $\scr P^{\nu, \mu}_{s,T}$  by
\begin{equation}\label{N1}
H\big( (\mu^{(1)},\mu^{(2)})\big)(t)=\Big(\L_{X_t^{(1),\mu^{(1)}}},  \L_{X_t^{(2),\mu^{(2)}}} \Big),\quad t\ge s.
\end{equation}
Since $\mu^{(1)}_t\preceq \mu^{(2)}_t$,  the standard Banach fixed point theorem yields
\begin{equation}\label{N3}
\L_{X_{[s,T]}^{(1),\mu^{(1)}}}   \preceq \L_{X_{[s,T]}^{(2),\mu^{(2)}}},
\end{equation}
In the sequel, we aim to prove that $H$ is contractive under the metric $\rr_{\lambda}$ for large enough $\lambda>0.$
Let $(\mu^{(1)},\mu^{(2)}),(\tilde \mu^{(1)},\tilde \mu^{(2)}) \in \scr P^{\nu, \mu}_{s,T}.$
 By It\"o's formula and the assumption $({\bf A})$ , we get
\begin{equation}
  \begin{split}
   &\d \big( \big|X_t^{(1),\mu^{(1)}}-X_t^{(1),\tilde \mu^{(1)}}\big|^2+\big|X_t^{(2),\mu^{(2)}}-X_t^{(2),\tilde \mu^{(2)}}\big|^2\big)\\
   &\quad\leq K(t)\Big(|X_t^{(1),\mu^{(1)}}-X_t^{(1),\tilde \mu^{(1)}}\big|^2\\
   &\quad+|X_t^{(2),\mu^{(2)}}-X_t^{(2),\tilde \mu^{(2)}}\big|^2+\W_2(\mu^{(1)}_t, \tilde \mu^{(1)}_t)^2+ \W_2(\mu^{(2)}_t, \tilde \mu^{(2)}_t)^2  \Big)+\d M_t
  \end{split}
\end{equation}
for some martingale $M_t.$
Then, taking expectation on both sides, using Grownwall's inequality and taking $X_s^{(1),\mu^{(1)}}=X_s^{(1),\tilde \mu^{(1)}}=\xi$ and $X_s^{(2),\mu^{(2)}}=X_s^{(2),\tilde \mu^{(2)}}=\eta$ into consideration, we  have
\begin{equation*}\label{esti}
  \begin{split}
   &\e^{-2\lambda t}\big( \E |X_t^{(1),\mu^{(1)}}-X_t^{(1),\tilde \mu^{(1)}}\big|^2+\E|X_t^{(2),\mu^{(2)}}-X_t^{(2),\tilde \mu^{(2)}}\big|^2\big)\\
   &\quad\leq \frac{1}{2\lambda} K(T)\e^{K(T)(T-s)} \sup_{t\in[s,T]} \big[ \e^{-2\lambda t}\big(  \W_2(\mu^{(1)}_t, \tilde \mu^{(1)}_t)^2+ \W_2(\mu^{(2)}_t, \tilde \mu^{(2)}_t)^2\big) \big].
  \end{split}
\end{equation*}
This yields
\begin{equation*}
   \rr_{\lambda}\big(H(\mu^{(1)},~\mu^{(2)}), H(\tilde \mu^{(1)},~\tilde\mu^{(2)})\big)\leq \frac{1}{2\lambda} K(T)\e^{K(T)(T-s)} \rr_{\lambda} \big((\mu^{(1)},~\mu^{(2)}),~(\tilde \mu^{(1)},~\tilde\mu^{(2)})  \big).
\end{equation*}
Hence, for $\lambda>0$ large enough, $H$ is contractive under the metric $\rr_{\lambda}$.
\end{proof}

\begin{proof}[Proof of Theorem \ref{T2}]
Let $s\geq 0$ and $\nu\preceq \mu$ with $\nu_i=\mu_i$. By $ \bar \Lambda_s \preceq \Lambda_s,$
we have $\bar \Lambda_s \nu \preceq \Lambda_s \mu.$ According to \cite[Theorem 5]{ KKO},  there exists $\P_s \in \C(\bar \Lambda_s\nu, \Lambda_s\mu)$ such that
\begin{equation}\label{coup}
\P_s\big(\{(\xi,\eta)\in C_s \times C_s: \xi_t\geq \eta_t, t\geq s\}\big)=1.
\end{equation}
Since $\bar \Lambda_s\nu$ and $ \Lambda_s\mu$ are   solutions to the martingale  problems associated with the operators $\bar L$ and $L$ in \eqref{N2}, respectively, according to the superposition principle (see \cite{SV}),  we have $\L_{(\xi, \eta)}=\P_s$, where $(\xi_t,\eta_t)$  solves
\begin{equation}\label{PSDE1}
\begin{cases}
&\d \xi_t=\bar b(t, \eta_t, \L_{\eta_t}) \d t+\bar \si(t, \eta_t, \L_{\eta_t}) \d B_t^1,~~t\geq s\\
& \d \eta_{t}=b(t,\xi_{t}, \L_{\xi_{t}}) \d t+\si(t,\xi_{t}, \L_{\xi_{t}}) \d B_t^2,~~ t\geq s
\end{cases}
\end{equation}
for
  some $2d$-dimensional Brownian motions $(B_t^1, B_t^2)_{t\geq s}$ on the probability space  $(C_s\times C_s, \B(C_s\times C_s),\{\F_t\}_{t\geq s}, \P_s),$
where $ \{\F_t\}_{t\geq s}$ is the natural filtration induced by the coordinate processes $(\xi_t, \eta_t)_{t\geq s}$.

Since $\L_{(\xi, \eta)}=\P_s$ satisfying \eqref{coup}, then we have $\xi_t\geq \eta_t$  for all $t\geq s.$ Moreover, note that $\L_{(\xi_s, \eta_s)}\in \C(\nu,\mu)$ and $\nu_i=\mu_i$ imply $\xi_s^i=\eta_s^i.$ Thus, we find    $\P_s$-a.s.
\begin{equation}\label{NN3}
  \int_s^t \bar b_i(r, \xi_r,  \mu^{(1)}_{r})\d r+\int_s^t \bar \si_{i\cdot}(r, \xi_r,  \mu^{(1)}_{r})\d B_t^1  \leq   \int_s^t b_i(r, \eta_r,  \mu^{(2)}_{r})+ \int_s^t \si_{i\cdot}(r, \eta_r,  \mu^{(2)}_{r})\d B_t^1,\, t\ge s.
\end{equation}
Taking conditional expectation with respect to $\F_s$, we drive
\begin{equation*}
\int_s^t \E\big(\bar b_i(r, \xi_r,  \mu_r^{(1)})| \F_s\big)\d r \leq \int_s^t \E\big(b_i(r, \eta_r,  \mu_r^{(2)})| \F_s\big)\d r,~~~~t\geq s.
\end{equation*}
 By the continuity of $\bar b$ and $b$ and  $\mu_r^{(1)}\to \nu, \mu_r^{(2)}\to \mu$  weakly as  $r\downarrow s$, we obtain
 \begin{equation*}
\bar b_i(s, \xi_s,  \nu)\le \bar b_i(s, \eta_s,  \nu),~~~t\geq s,~\P_s-a.s.
\end{equation*}
Consequently, for $\pi:=\L_{(\xi_s, \eta_s)}\in \C(\nu, \mu)$ with   $\pi \big(\{x\leq y\}\big)=1$,
$$\bar b_i(s, x, \nu) \leq b_i(s,y, \mu), ~(x,y)\in \supp \pi.$$
Thus, the first assertion of (i) holds true. Hence, for $\nu=\mu,$   $\pi (\{x\leq y\})=1$ implies  $x=y, \pi-a.s.$ Whence, we have
 \begin{equation*}\label{NN4}
 \bar b_i(s, x, \mu) \leq b_i(s,x, \mu), ~x\in \supp \mu.
\end{equation*}
In general, for any $x\in \R^d,$ let $\mu_{\ee}=(1-\ee)\mu+\ee\delta_x.$ It is easy to see that  $x\in \supp\mu_{\ee}.$ Thus applying   \eqref{NN4} with  $\mu_{\ee}$ replaced by  $\mu$ yields
$$\bar b_i(s, x, \mu_\vv) \leq b_i(s, x, \mu_\vv), ~s\geq 0,~\ee>0.$$
Consequently,  the  second assertion in  $(i)$ follows by taking $\ee \downarrow 0$.

Below we assume $\nu\preceq \mu$ with $\nu_{ij}=\mu_{ij}$ so that
$\nu_i=\mu_i,\nu_j=\mu_j.$ Thus,  we deduce from  \eqref{NN3} that for any $\ee \in [0,1],$
\begin{equation*}
 \begin{split}
&\int_s^t \big[\ee \bar b_i(r, \xi_r,  \mu_r^{(1)})+(1-\ee)\bar b_j(r, \xi_r,  \mu_r^{(1)})\big]\d r+\int_s^t \big[\ee \bar \si_{i\cdot}(r, \xi_r,  \mu_r^{(1)})+(1-\ee)\bar \si_{j\cdot}(r, \xi_r,  \mu_r^{(1)})\big]\d B_r^1\\
&\leq \int_s^t \big[\ee \bar b_i(r, \eta_r,  \mu_r^{(2)})+(1-\ee)\bar b_j(r, \eta_r,  \mu_r^{(2)})\big]\d r+\int_s^t \big[\ee \bar \si_{i\cdot}(r, \eta_r,  \mu_r^{(2)})+(1-\ee)\bar \si_{j\cdot}(r, \eta_r,  \mu_r^{(2)})\big]\d B_r^2
 \end{split}
\end{equation*}	
Dividing both side by $\frac{1}{\sqrt{t-s}}$ and letting $t\downarrow s,$ we  find
\begin{equation*}
\begin{split}
 &N\left(
      0, \ee^2 \bar a_{ii}(s,\xi_s, \nu)+2\ee(1-\ee)\bar a_{ij}(s,\xi_s, \nu)+(1-\ee)^2 \bar a_{jj}(s,\xi_s, \nu)\right)\\
&  \le    N\left(
   0, \ee^2 a_{ii}(s,\eta_s, \mu)+2\ee(1-\ee) a_{ij}(s,\eta_s, \mu)+(1-\ee)^2 a_{jj}(s,\eta_s, \mu)\right) .
      \end{split}\end{equation*}
 By the symmetry of centred normal distribution, this further implies
\begin{equation*}
 \begin{split}&\ee^2 \bar a_{ii}(s,\xi_s, \nu)+2\ee(1-\ee)\bar a_{ij}(s,\xi_s, \nu)+(1-\ee)^2 \bar a_{jj}(s,\xi_s, \nu)\\&=\ee^2 a_{ii}(s,\eta_s, \mu)+2\ee(1-\ee) a_{ij}(s,\eta_s, \mu)+(1-\ee)^2 a_{jj}(s,\eta_s, \mu),  \ee \in [0,1]. \end{split}
\end{equation*}	
Consequently, dividing  by $\ee^2$ on both sides yields
 \begin{equation}\label{NN5}
 \bar a_{ij}(s, \xi_s, \nu) = a_{ij}(s,\eta_s, \mu),~\P_s-a.s.,
\end{equation}
which gives  for  $\pi=\L_{(\xi_s, \eta_s)} \in \C(\nu, \mu),$
$$\bar a_{ij}(s, x, \nu)=a_{ij}(s,y,\mu),~~(x,y)\in \supp \pi, ~s\geq 0.$$
Thus, by the approximation trick above, we can obtain the second assertion in (ii).
\end{proof}

 \begin{proof}[Proof of Theorem \ref{T3}]
Due to $\bar P_{s,t}^* \preceq  P_{s,t}^*$, we have $\L_{\bar X_{s,t}}=\bar P_{s,t}^*\nu \preceq  P_{s,t}^*\mu=\L_{X_{s,t}}$ for $\nu\preceq \mu.$ Therefore, in particular for $f(x)=x_i \in \scr U$, we obtain
$$\E(\bar X_{s,t})_i\leq \E(X_{s,t})_i.$$
Since $\nu_i(f)=\mu_i(f)$, we then deduce from \eqref{PSDE1} with $\xi_t$ and $\eta_t$ replaced by $X_{s,t}$ and $\bar X_{s,t}$ in \eqref{ddsde1} that
$$\int_s^t \E(\bar b_i(s,\bar X_{s,r}, \L_{\bar X_{s,r}} ))\d r\leq \int_s^t \E( b_i(s, X_{s,r}, \L_{X_{s,r}}))\d r.$$
Dividing by $t-s$ on both side followed by $t\downarrow s$, we get $(i).$

Since $\bar P_{s,t}^*\mu \leq  P_{s,t}^*\mu,$ for
$f\in \scr U_b\cap C^{\infty}_b(\R^d)$, we have  $\mu(\bar L_{s,\mu}f)\leq \mu( L_{s,\mu}f).$
In particular, taking $\mu=\dd_x$ yields $ \bar L_{s,\dd_x}f(x)\leq   L_{s,\dd_x}f(x).$ With this at hand, we can get the assertion (ii) by following exactly the argument of \cite[Lemma 3.4]{Wang}.
 \end{proof}

\subsection{Proofs of Theorems \ref{T4} and \ref{T5} }
We first present some lemmas. 

\begin{lem}\label{Lem}
Let $\mu=\frac{1}{2} (\mu^{(1)}+\mu^{(2)})$, where $\mu^{(1)}, \mu^{(2)}\in \scr P_+$ such that $\mu^{(1)}\preceq \mu^{(2)}$. Then, $\mu\in\scr P_+.$
\end{lem}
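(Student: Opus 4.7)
The plan is to verify the defining FKG inequality $\mu(fg) \ge \mu(f)\mu(g)$ for arbitrary $f,g \in \scr U_b$ by a direct expansion using linearity in $\mu$, together with the two ingredients we are given: the positive correlation of each $\mu^{(i)}$ and the stochastic order $\mu^{(1)} \preceq \mu^{(2)}$.

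Concretely, I would first expand $\mu(fg) = \tfrac12 \mu^{(1)}(fg) + \tfrac12 \mu^{(2)}(fg)$ and apply the FKG inequality to each summand, obtaining
\[
\mu(fg) \ge \tfrac12 \mu^{(1)}(f)\mu^{(1)}(g) + \tfrac12 \mu^{(2)}(f)\mu^{(2)}(g).
\]
Next, I would expand the product $\mu(f)\mu(g) = \tfrac14\bigl(\mu^{(1)}(f)+\mu^{(2)}(f)\bigr)\bigl(\mu^{(1)}(g)+\mu^{(2)}(g)\bigr)$ and reduce the desired inequality, after multiplying by $4$ and cancelling, to the bilinear inequality
\[
\mu^{(1)}(f)\mu^{(1)}(g) + \mu^{(2)}(f)\mu^{(2)}(g) \ge \mu^{(1)}(f)\mu^{(2)}(g) + \mu^{(2)}(f)\mu^{(1)}(g),
\]
which rearranges to
\[
\bigl(\mu^{(2)}(f) - \mu^{(1)}(f)\bigr)\bigl(\mu^{(2)}(g) - \mu^{(1)}(g)\bigr) \ge 0.
\]

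Finally, the last inequality is immediate from $\mu^{(1)} \preceq \mu^{(2)}$: since $f, g \in \scr U_b$, both factors are nonnegative, so their product is nonnegative. Combining with the FKG bound above yields $\mu(fg) \ge \mu(f)\mu(g)$, which is the desired conclusion. There is no real obstacle here; the whole proof is a short algebraic manipulation, and the only step that uses more than the linearity of $\mu \mapsto \mu(\cdot)$ is recognising that stochastic monotonicity in $\scr U_b$ provides exactly the sign needed for the cross-term to be controlled.
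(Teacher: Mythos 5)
Your proof is correct and is essentially the same as the paper's: both rely on the FKG inequality for $\mu^{(1)}$ and $\mu^{(2)}$ individually together with the observation that $\mu^{(1)}\preceq\mu^{(2)}$ forces $\bigl(\mu^{(2)}(f)-\mu^{(1)}(f)\bigr)\bigl(\mu^{(2)}(g)-\mu^{(1)}(g)\bigr)\ge 0$. You simply run the algebra in the reverse order (reducing the goal to the cross-term inequality rather than starting from it), which is a cosmetic difference.
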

\begin{proof}
Due to  $\mu^{(1)}\preceq \mu^{(2)}$, we have for any $f,g\in\scr U_b$
$$\big(\mu^{(1)}(f)-\mu^{(2)}(f)\big)\big(\mu^{(1)}(g)-\mu^{(2)}(g)\big)\ge0.$$
The above inequality is equivalent to the following inequality 
\begin{equation}\label{E1}
2\big(\mu^{(1)}(f)\mu^{(1)}(g)+\mu^{(2)}(f)\mu^{(2)}(g)\big)\ge \big(\mu^{(1)}(f)+\mu^{(2)}(f)\big) \big(\mu^{(1)}(g)+\mu^{(2)}(g)\big).
\end{equation}
Furthermore,  in terms of $\mu^{(1)}, \mu^{(2)}\in \scr P_+$, we deduce for any $f,g\in\scr U_b$,
$$\mu^{(1)}(fg)\ge \mu^{(1)}(f)\mu^{(1)}(g),\quad\mu^{(2)}(fg)\ge \mu^{(2)}(f)\mu^{(2)}(g).$$ Substituting this \eqref{E1} yields 
\begin{equation*} 
\ff{1}{2}\big(\mu^{(1)}(fg)+\mu^{(2)}(fg) \big)\ge \ff{1}{4}\big(\mu^{(1)}(f)+\mu^{(2)}(f)\big) \big(\mu^{(1)}(g)+\mu^{(2)}(g)\big) 
\end{equation*}
so that   $\mu\in\scr P_+.$
\end{proof}

\begin{lem}\label{lem1}
Suppose that $(P_{s,t}^*)_{t\ge s}$ preserves positive correlations and let $\mu$ be the same as that in Lemma \ref{Lem}. Then, for 
$f,g\in \scr U_b\cap C_b^\infty(\R^d)$ such that
\begin{equation}\label{EE}
\mu(fg)=\mu(f)\mu(g),
\end{equation}
we have
\begin{equation}\label{in1}
\begin{split}
 2 \big(\mu^{(1)}(L_{s, \mu}(fg))+\mu^{(2)}(L_{s, \mu}(fg))\big) &\geq  \big( \mu^{(1)}(L_{s, \mu}(f))+\mu^{(2)}(L_{s, \mu}(f))\big)\big(\mu^{(1)}(g)+\mu^{(2)}(g)\big)\\
 &\quad+\big( \mu^{(1)}(L_{s, \mu}(g))+\mu^{(2)}(L_{s, \mu}(g))\big)\big(\mu^{(1)}(f)+\mu^{(2)}(f)\big).
 \end{split}
 \end{equation}
Equivalently,
\begin{equation}\label{in2}
\begin{split}
 2 \big(\mu^{(1)}(\Gamma_1(f,g))+\mu^{(2)}(\Gamma_1(f,g))\big) &\geq  \big( \mu^{(2)}(L_{s, \mu}(g))-\mu^{(1)}(L_{s, \mu}(g))\big)\big(\mu^{(1)}(f)-\mu^{(2)}(f)\big)\\
 &\quad+\big(\mu^{(2)}(L_{s, \mu}(f))+\mu^{(1)}(L_{s, \mu}(f))\big)\big(\mu^{(1)}(g)-\mu^{(2)}(g)\big),
 \end{split}
 \end{equation}
where
$$\Gamma_1(f,g)(x):=L_{s, \mu}(fg)(x)-f(x)L_{s, \mu}g(x)-g(x)L_{s, \mu}f(x)=\sum_{i,j}a_{i,j}(t,x,\mu)\frac{\pp}{\pp x_i}f(x)\frac{\pp}{\pp x_j}g(x).$$
\end{lem}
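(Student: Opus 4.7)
The plan is to read off \eqref{in1} as the first-order consequence, at $t=s^+$, of the FKG inequality applied to the flow $t\mapsto P_{s,t}^*\mu$, using the equality $\mu(fg)=\mu(f)\mu(g)$ as the zeroth-order matching condition. First I would invoke Lemma \ref{Lem} to conclude $\mu\in\scr P_+$; then by the hypothesis that $(P_{s,t}^*)_{t\geq s}$ preserves positive correlations, $P_{s,t}^*\mu\in\scr P_+$ for all $t\geq s$, so
$$
(P_{s,t}^*\mu)(fg)\ \geq\ (P_{s,t}^*\mu)(f)\,(P_{s,t}^*\mu)(g),\qquad t\geq s.
$$
Since $f,g\in\scr U_b\cap C_b^\infty(\R^d)$ and the hypothesis forces equality of both sides at $t=s$, the nonnegative function $\phi(t):=(P_{s,t}^*\mu)(fg)-(P_{s,t}^*\mu)(f)(P_{s,t}^*\mu)(g)$ satisfies $\phi(s)=0$ and therefore $\phi'(s^+)\geq0$.

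The second step is to compute $\phi'(s^+)$. Writing $\mu_t:=P_{s,t}^*\mu$ and using the characterisation of the nonlinear Fokker--Planck flow from Section 2 (or It\^o's formula applied to the McKean--Vlasov SDE \eqref{DDSDE}), one has
$$
\frac{\d}{\d t}\mu_t(\varphi)\bigg|_{t=s^+}=\mu(L_{s,\mu}\varphi),\qquad \varphi\in C_b^\infty(\R^d),
$$
so by the Leibniz rule the inequality $\phi'(s^+)\geq0$ becomes
$$
\mu(L_{s,\mu}(fg))\ \geq\ \mu(f)\,\mu(L_{s,\mu}g)+\mu(g)\,\mu(L_{s,\mu}f).
$$
Expanding $\mu=\tfrac12(\mu^{(1)}+\mu^{(2)})$ on both sides and multiplying through by $4$ yields exactly \eqref{in1}.

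To pass from \eqref{in1} to \eqref{in2} the plan is purely algebraic: I apply the Leibniz-type identity
$$
L_{s,\mu}(fg)\;=\;f\,L_{s,\mu}g+g\,L_{s,\mu}f+\Gamma_1(f,g)
$$
(which is the definition of $\Gamma_1$ stated in the lemma) under $\mu^{(k)}$ for $k=1,2$, and insert it into \eqref{in1}; the cross terms $\mu^{(k)}(fL_{s,\mu}g)$ and $\mu^{(k)}(gL_{s,\mu}f)$ are then regrouped against the products $(\mu^{(1)}(f)+\mu^{(2)}(f))(\mu^{(1)}(L_{s,\mu}g)+\mu^{(2)}(L_{s,\mu}g))$ and its symmetric counterpart, invoking the equality $\mu(fg)=\mu(f)\mu(g)$ in the form $2\sum_k\mu^{(k)}(fg)=(\sum_k\mu^{(k)}(f))(\sum_k\mu^{(k)}(g))$ to produce the differences appearing on the right-hand side of \eqref{in2}.

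The step I expect to be the main obstacle is the rigorous justification of the derivative identity $\tfrac{\d}{\d t}\mu_t(\varphi)|_{t=s^+}=\mu(L_{s,\mu}\varphi)$ for $\varphi\in\{f,g,fg\}$. Under assumption {\bf(A)} and the continuity of $(a,b)$ in $(t,x,\mu)\in[0,\infty)\times\R^d\times\scr P_2$, this follows from applying It\^o's formula to the well-posed McKean--Vlasov SDE \eqref{DDSDE} with $X_s\sim\mu$, taking expectations to kill the martingale part, and then using the continuity of $r\mapsto\mu_r$ in $\scr P_2$ together with boundedness of $L_{s,\mu}\varphi$ to justify letting $(t-s)^{-1}\int_s^t\mu_r(L_{r,\mu_r}\varphi)\,\d r\to\mu(L_{s,\mu}\varphi)$; the boundedness of $\varphi$ and its derivatives, together with the local boundedness assumption on $\mu(|b|+\|a\|)$ in {\bf(A)}, is exactly what is needed to make these limits legitimate and to ensure $\phi'(s^+)$ equals the claimed expression rather than merely dominating it.
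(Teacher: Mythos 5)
Your proposal is correct and follows the same overall strategy as the paper: invoke Lemma \ref{Lem} to get $\mu\in\scr P_+$, apply preservation of positive correlation to $P_{s,t}^*\mu$, use $\mu(fg)=\mu(f)\mu(g)$ to match at $t=s$, and differentiate at $t=s^+$. The algebraic passage from \eqref{in1} to \eqref{in2} is likewise treated as a direct computation, matching the paper.

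There is, however, one point on which your version is actually cleaner and, I would argue, more careful than the paper's. The paper passes from $(P_{s,t}^*\mu)(fg)\geq(P_{s,t}^*\mu)(f)(P_{s,t}^*\mu)(g)$ to an inequality written in terms of $(P_{s,t}^*\mu^{(1)})(\cdot)$ and $(P_{s,t}^*\mu^{(2)})(\cdot)$ \emph{before} taking $t\downarrow s$, which amounts to writing $(P_{s,t}^*\mu)(h)=\tfrac12\big[(P_{s,t}^*\mu^{(1)})(h)+(P_{s,t}^*\mu^{(2)})(h)\big]$ for $t>s$. That identity is false in general here, since $P_{s,t}^*$ is the nonlinear McKean--Vlasov flow; and taken literally, the paper's derivative would produce $\mu^{(k)}(L_{s,\mu^{(k)}}\varphi)$ rather than the $\mu^{(k)}(L_{s,\mu}\varphi)$ appearing in \eqref{in1}. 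You avoid this entirely: you differentiate $\phi(t)=(P_{s,t}^*\mu)(fg)-(P_{s,t}^*\mu)(f)(P_{s,t}^*\mu)(g)$ at $t=s^+$ to get $\mu(L_{s,\mu}(fg))\geq\mu(f)\mu(L_{s,\mu}g)+\mu(g)\mu(L_{s,\mu}f)$, and only \emph{then} expand $\mu=\tfrac12(\mu^{(1)}+\mu^{(2)})$ by linearity of integration against the fixed operator $L_{s,\mu}$. This is exactly the correct way to reach \eqref{in1}, so your route is equivalent in spirit but more rigorous at that step. Your remarks on justifying $\tfrac{\d}{\d t}\mu_t(\varphi)|_{t=s^+}=\mu(L_{s,\mu}\varphi)$ via It\^o's formula and the boundedness/continuity built into \textbf{(A)} are also the right level of care, and are implicitly what the paper is using.
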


\begin{proof}
By a direct calculation, we obtain that \eqref{in1} is equivalent to \eqref{in2}. Therefore, it is sufficient to show that \eqref{in1} holds true. 
From Lemma \ref{Lem}, we have  $\mu\in\scr P_+.$ Since $(P_{s,t}^*)_{t\ge s}$ preserves positive correlations, we deduce for any $f,g\in \scr U_b\cap C_b^\infty(\R^d)$, 
$$(P_{s,t}^*\mu)(fg)\ge (P_{s,t}^*\mu)(f)(P_{s,t}^*\mu)(g),\quad t\ge s. $$
 This, together with $\mu=\frac{1}{2} (\mu^{(1)}+\mu^{(2)})$, yields 
 \begin{equation*}
 2\big((P_{s,t}^*\mu^{(1)})(fg)+(P_{s,t}^*\mu^{(2)})(fg)\big)\ge \big((P_{s,t}^*\mu^{(1)})(f)+(P_{s,t}^*\mu^{(2)})(f)\big)\big((P_{s,t}^*\mu^{(1)})(g)+(P_{s,t}^*\mu^{(2)})(g)\big).
 \end{equation*}
 Thus, by taking \eqref{EE} into consideration, we derive that 
 \begin{equation*}
 \begin{split}
 &\ff{2}{t-s}\big((P_{s,t}^*\mu^{(1)})(fg)+(P_{s,t}^*\mu^{(2)})(fg)-(\mu^{(1)})(fg)+(\mu^{(2)})(fg)\big)\\
 &\ge \ff{1}{t-s}\big\{\big((P_{s,t}^*\mu^{(1)})(f)+(P_{s,t}^*\mu^{(2)})(f)\big)\big((P_{s,t}^*\mu^{(1)})(g)+(P_{s,t}^*\mu^{(2)})(g)\big)\\
 &\quad-
 \big( \mu^{(1)} (f)+ \mu^{(2)} (f)\big)\big( \mu^{(1)} (g)+ \mu^{(2)} (g)\big)\big\}.
 \end{split}
 \end{equation*}
 Consequently, the assertion \eqref{in1} follows by taking $t\downarrow s.$
\end{proof}

\begin{lem}\label{lem4}
If $(P_{s,t}^*)_{t\ge s}$ preserves positive correlation and $\mu\in\scr P_+$ same as in Lemma \ref{Lem}, then   $a_{ij}(s,x,\mu)\geq 0$ for any $s\ge0, x\in\R^d$.
\end{lem}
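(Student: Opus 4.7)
The plan is to apply Lemma \ref{lem1} with the diagonal choice $\mu^{(1)}=\mu^{(2)}=\mu$, which is admissible since $\mu\preceq\mu$ trivially and $\mu\in\scr P_+$ (this is the conclusion of Lemma \ref{Lem} under the standing hypotheses). Under this choice the two bilinear terms on the right-hand side of \eqref{in2} vanish identically, and the inequality collapses to the clean bound
$$\int_{\R^d}\Gamma_1(f,g)(x)\,\mu(\d x)=\int_{\R^d}\sum_{k,l=1}^d a_{kl}(s,x,\mu)\,\pp_kf(x)\,\pp_lg(x)\,\mu(\d x)\ge 0$$
for every $f,g\in\scr U_b\cap C_b^\infty(\R^d)$ satisfying the covariance condition $\mu(fg)=\mu(f)\mu(g)$.

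The diagonal entry case $i=j$ is immediate because $a$ is positive definite under assumption $({\bf A})$, so $a_{ii}(s,x,\mu)>0$ pointwise. For the off-diagonal case $i\ne j$, I would test with separated single-coordinate functions $f(x)=\phi(x_i)$ and $g(x)=\psi(x_j)$, where $\phi,\psi\in C_b^\infty(\R)$ are nondecreasing with derivatives localized near prescribed scalar points. Only the $(i,j)$- and (by symmetry of $a$) the $(j,i)$-summands survive in $\Gamma_1(f,g)$, so the integrated bound reduces to
$$\int_{\R^d}a_{ij}(s,x,\mu)\,\phi'(x_i)\,\psi'(x_j)\,\mu(\d x)\ge 0.$$

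The main obstacle is reconciling the covariance constraint $\mu(\phi(x_i)\psi(x_j))=\mu(\phi(x_i))\mu(\psi(x_j))$ with enough localization of $\phi'\psi'$ to pass from this averaged bound to a pointwise inequality on $a_{ij}$. For a generic $\mu\in\scr P_+$ one only has $\mu(\phi(x_i)\psi(x_j))\ge\mu(\phi(x_i))\mu(\psi(x_j))$, so equality must be engineered. I would handle this in two steps: first, apply the scheme with Dirac data $\mu=\delta_z$ (which belongs to the admissible class of Lemma \ref{Lem} since $\delta_z\in\scr P_+$ and $\delta_z(fg)=\delta_z(f)\delta_z(g)$ holds automatically for every $f,g$), producing the pointwise bound $a_{ij}(s,z,\delta_z)\ge 0$ at every $z\in\R^d$; second, extend to the target $\mu$ by a $\W_2$-density argument that approximates $\mu$ by suitable convex combinations of Diracs supported on totally ordered chains (which remain in $\scr P_+$) and uses the joint continuity of $a_{ij}$ on $[0,\infty)\times\R^d\times\scr P_2$ from $({\bf A})$. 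This final continuity-in-$\mu$ transfer, together with verifying that the approximating measures can be kept inside the class covered by Lemma \ref{Lem}, is the technical heart of the argument.
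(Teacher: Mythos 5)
The diagonal specialization $\mu^{(1)}=\mu^{(2)}=\mu$ of Lemma~\ref{lem1} is a correct starting point, the reduction to $\mu(\Gamma_1(f,g))\ge 0$ under the constraint $\mu(fg)=\mu(f)\mu(g)$ is right, and so is dispatching $a_{ii}$ by positive definiteness. You have also correctly identified the real obstacle: for a general $\mu\in\scr P_+$ the equality $\mu(\phi(x_i)\psi(x_j))=\mu(\phi(x_i))\mu(\psi(x_j))$ fights against localizing $\phi'\psi'$. But the two-step fix you propose does not close the gap. Taking $\mu=\delta_z$ changes \emph{both} roles that $\mu$ plays: it is the measure against which you integrate \emph{and} the measure entering the coefficient $a_{ij}(s,\cdot,\mu)$. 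So the Dirac computation delivers only $a_{ij}(s,z,\delta_z)\ge 0$. To reach $a_{ij}(s,x,\mu)\ge 0$ for the fixed target $\mu$, a $\W_2$-continuity argument would need a sequence $(z_n,\nu_n)\to(x,\mu)$ in $\R^d\times\scr P_2$ along which you already know $a_{ij}(s,z_n,\nu_n)\ge 0$; all you control is the bound on the ``diagonal'' $\{(z,\delta_z):z\in\R^d\}$, and that set is not dense. Indeed $\W_2(\delta_{z_n},\mu)^2=\int|x-z_n|^2\,\mu(\d x)\ge\var(\mu)$, so $\delta_{z_n}\to\mu$ in $\W_2$ forces $\mu$ to be a Dirac. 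Passing through discrete measures $\mu_n$ on totally ordered chains does not repair this: you would then need $a_{ij}(s,x,\mu_n)\ge 0$ at the fixed point $x$ for those $\mu_n$, which the Dirac computation does not give, and for a nontrivial finite chain the constraint $\mu_n(fg)=\mu_n(f)\mu_n(g)$ is still a genuine restriction blocking localization.

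The paper circumvents this by decoupling the two roles of $\mu$. It applies the diagonal case of Lemma~\ref{lem1} to test functions $f_\ee,g_\ee\in\scr U_b\cap C_b^\infty(\R^d)$ that are (essentially) constant outside the small ball $B_\ee(x)$, so that $L_{s,\mu}f_\ee$, $L_{s,\mu}g_\ee$ and $L_{s,\mu}(f_\ee g_\ee)$ vanish off $B_\ee(x)$. The integrating measure can then be replaced by the normalized restriction $\mu(B_\ee(x))^{-1}I_{B_\ee(x)}\mu$, which converges weakly to $\delta_x$ as $\ee\downarrow0$, \emph{while the measure argument of $L_{s,\mu}$ --- hence of $a_{ij}(s,\cdot,\mu)$ --- stays fixed at $\mu$ throughout}. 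In the limit one obtains the pointwise inequality $\Gamma_1(f,g)(x)\ge 0$ with the original $\mu$ in the coefficients, and choosing $f(z)=z_i$, $g(z)=z_j$ near $x$ gives $a_{ij}(s,x,\mu)\ge 0$. You should localize the integration variable in this way rather than the measure argument itself.
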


\begin{proof}
For $\vv>0$ and $x\in\R^d$, let $B_{\ee}(x)=\{y\in \R^d |\, |x-y|\leq \ee\}$, the closed ball centred at the point $x$ with the radius $\ee.$ 
For $f,g\in\scr U_b\cap C_b^\infty(\R^d)$, let $f_{\ee}, g_{\ee}\in \scr U_b\cap C_b^\infty(\R^d)$ with $f_{\ee} \in[f(x)-\vv,f(x)+\vv]$ and $g_{\ee} \in[g(x)-\vv,g(x)+\vv]$ on $B_{\ee}(x)^c$
be the point-wise approximation of $f,g\in\scr U_b\cap C_b^\infty(\R^d)$ (i.e., $f_\vv\to f$ and $g_\vv\to g$ as $\vv\to0$).
Below we assume that $\mu\in\scr P_+$ is given in Lemma \ref{Lem}. 
By Lemma \ref{lem1}, we obtain for $f_{\ee}, g_{\ee}\in \scr U_b\cap C_b^\infty(\R^d)$, 
$$\mu(L_{s,\mu}(f_{\ee}g_{\ee}))(x)\geq \mu(f_{\ee})\mu(L_{s,\mu}g_{\ee})(x)+\mu(g_{\ee})\mu(L_{s,\mu}f_{\ee})(x), ~~ s\geq 0.$$
Then, combining the fact that $f_{\ee}, g_{\ee}\in \scr U_b\cap C_b^\infty(\R^d)$ are  constants on $B_{\ee}(x)^c$, 
we have
\begin{equation}\label{EE2}
\begin{split}
\int_{\R^d} (L_{s,\mu}(f_{\ee}g_{\ee}))(y)\frac{I_{B_{\ee}(x)}(y)(dy)}{\mu(B_{\ee}(x))}&\geq
\mu(f_{\ee})\int_{\R^d}L_{s,\mu}g_{\ee}(y))\frac{I_{B_{\ee}(x)}(y)(dy)}{\mu(B_{\ee}(x))}\\
&\quad+
\mu(g_{\ee})\int_{\R^d}L_{s,\mu}f_{\ee}(y))\frac{I_{B_{\ee}(x)}(y)(dy)}{\mu(B_{\ee}(x))}.
\end{split}
\end{equation}
Observe that 
\begin{equation*}
\mu(f_{\ee})=\int_{B_\vv(x)}(f_\vv(z)-f(x))\mu(\d z)+\int_{B_\vv(x)^c}(f_\vv(z)-f(x))\mu(\d z)+f(x)\rightarrow f(x) 
\end{equation*}
as $\vv\to0$, where the first integral goes to zero since $f_\vv$ is uniformly continuous on $B_\vv(x)$, and the second integral tends to zero due to  $f_\vv\in[f(x)-\vv,f(x)+\vv]$ on $B_\vv(x)^c. $  Similarly, we obtain $\mu(g_\vv)\to g(x)$ as $\vv\to 0.$ Furthermore, 
note that $\frac{\mu(B_{\ee}(x) \cap \cdot)}{\mu(B_{\ee}(x))}$ converges weakly  to $\delta_x$ as $\ee \downarrow0.$ Thus, taking $\vv\downarrow0$ on both sides of \eqref{EE2}  yields 
\begin{align*}
L_{s,\mu}(fg)(x)\geq f(x)L_{s,\mu}g(x)+g(x)L_{s,\mu}f(x).
\end{align*}
Thus, by choosing $f,g\in \scr U_b\cap C_b^\infty(\R^d)$ such that in a neighbourhood of $x$
$$f(z)=z_i,~~~g(z)=z_j,$$
we deduce that  $$x_jb_i(s,x,\mu)+x_ib_j(s,x,\mu)+a_{ij}(s,x,\mu)\geq x_jb_i(s,x,\mu)+x_ib_j(s,x,\mu)$$
so that  $a_{ij}(t,x,\mu)\geq 0.$
\end{proof}

\begin{lem}\label{lem2}
Assume $\mu^{(1)}:=\mu^{(1)}_i\times \mu^{(1)}_{\{i\}^c}$ and  $\mu^{(2)}:=\mu^{(2)}_i\times \mu^{(2)}_{\{i\}^c}$  with $\mu^{(1)}_i= \mu^{(2)}_i$, where $\mu^{(1)}_i$, $\mu^{(2)}_i$, $\mu^{(1)}_{\{i\}^c}$, $ \mu^{(2)}_{\{i\}^c}\in \scr P_+, $ and suppose further that 
$(P_{s,t}^*)_{t\ge s}$ preserves positive correlation. Then, 
 $$\mu^{(1)}( b_i(t,  \cdot, \mu^{(1)}))\leq \mu^{(2)}( b_i(t,  \cdot, \mu^{(2)})).$$
\end{lem}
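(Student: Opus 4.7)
My approach is to invoke the bilinear inequality of Lemma~\ref{lem1} at the midpoint $\mu:=\tfrac12(\mu^{(1)}+\mu^{(2)})$. Under the implicit ordering $\mu^{(1)}\preceq\mu^{(2)}$ (without which the asymmetric conclusion is not meaningful), $\mu$ lies in $\scr P_+$ by Lemma~\ref{Lem}. The test functions are chosen to isolate the drift: I would take $f\in\scr U_b\cap C_b^\infty(\R^d)$ depending only on $x_{\{i\}^c}$, and a sequence $g_n\in\scr U_b\cap C_b^\infty(\R^d)$ depending only on $x_i$, approximating the identity $x\mapsto x_i$ with $g_n'\to 1$ and $g_n''\to 0$ boundedly. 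The product structures and the equality $\mu^{(1)}_i=\mu^{(2)}_i$ make $\mu^{(k)}(fg_n)=\mu^{(k)}(f)\mu^{(k)}(g_n)$ and $\mu^{(1)}(g_n)=\mu^{(2)}(g_n)$, so hypothesis~\eqref{EE} is automatic and the second summand on the right of~\eqref{in2} drops out. The carr\'e-du-champ simplifies to $\Gamma_1(f,g_n)(x)=g_n'(x_i)\sum_{j\ne i}a_{ji}(s,x,\mu)\partial_j f(x_{\{i\}^c})\ge 0$ by Lemma~\ref{lem4} and $f,g_n\in\scr U_b$, while $L_{s,\mu}g_n\to b_i(s,\cdot,\mu)$. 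Passing to the limit one gets
$$\bigl(\mu^{(1)}(b_i(s,\cdot,\mu))-\mu^{(2)}(b_i(s,\cdot,\mu))\bigr)\bigl(\mu^{(2)}(f)-\mu^{(1)}(f)\bigr)\le 2\sum_{k=1,2}\mu^{(k)}\!\Bigl(\sum_{j\ne i}a_{ji}(s,\cdot,\mu)\partial_j f\Bigr).$$

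To extract the clean bound $\mu^{(1)}(b_i(s,\cdot,\mu))\le\mu^{(2)}(b_i(s,\cdot,\mu))$ I would drive the right-hand side to zero by invoking the companion structural result Theorem~\ref{T5}$(1)$ --- that $a_{ij}(s,x,\mu)$ depends only on $(x_i,x_j)$ for $\mu\in\scr P_+$, itself derivable in parallel via Lemma~\ref{lem1} with test functions of a coordinate pair --- and choosing $f$ as an increasing tensor product whose derivatives $\partial_j f$ are supported off the support of $a_{ji}(s,\cdot,\mu)$. This yields a comparison of the drift integrals, but with the midpoint $\mu$ in both third slots rather than $\mu^{(k)}$.

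\textbf{The hard part} will be upgrading this midpoint $\mu$ in the nonlinear third slot of $b_i$ to the individual $\mu^{(k)}$. For this I would interpolate via $\mu_\epsilon:=(1-\epsilon)\mu^{(1)}+\epsilon\mu^{(2)}$, which remains a product measure with the common $i$-th marginal and stays in $\scr P_+$ by a convex-combination calculation analogous to Lemma~\ref{Lem}. Applying the preceding reasoning to the pair $(\mu_\epsilon,\mu_{\epsilon'})$ for $\epsilon<\epsilon'$, dividing by $(\epsilon'-\epsilon)$ and sending $\epsilon'\downarrow\epsilon$, one would obtain the pointwise inequality $(\mu^{(2)}-\mu^{(1)})(b_i(s,\cdot,\mu_\epsilon))\ge 0$ for every $\epsilon\in[0,1]$. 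Writing $F(\epsilon):=\mu_\epsilon(b_i(s,\cdot,\mu_\epsilon))$, the final step is to integrate $F'(\epsilon)$ over $[0,1]$: the first component of $F'$ is precisely the non-negative quantity just obtained, whereas the remaining Lions-type derivative of $b_i$ in its measure slot requires additional control via the continuity of $b_i(s,x,\cdot)$ on $\scr P_2$ from~\textbf{(A)}. Handling this last piece cleanly is where I expect the main technical difficulty of the proof to lie.
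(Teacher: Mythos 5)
Your overall plan—work at the midpoint $\mu=\tfrac12(\mu^{(1)}+\mu^{(2)})$, invoke Lemma~\ref{Lem} to keep $\mu\in\scr P_+$, apply Lemma~\ref{lem1} with a pair of test functions whose products decouple under the product-measure hypothesis, and normalize so that the drift term $b_i$ is isolated—is precisely the strategy of the paper's own proof. The paper's concrete choice is a coordinate pick-out $f(z)=z_i-\int r\,\mu^{(1)}_i(\d r)$ (locally) together with a normalized bounded increasing function $g$ of a single other coordinate $z_k$, chosen so that $\mu^{(1)}(g)=0$, $\mu^{(2)}(g)=1$, which automatically gives $\mu(fg)=\mu(f)=0$ so that~\eqref{EE} holds; your $g_n\to x_i$ approximation and $f$ on $x_{\{i\}^c}$ are a minor re-parametrization of the same idea. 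You are also right that the statement as printed cannot stand without an unstated ordering hypothesis $\mu^{(1)}\preceq\mu^{(2)}$: without it the asymmetric conclusion is vacuous by symmetry, and Lemma~\ref{Lem} is not applicable.

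That said, neither your fixes for the two genuine difficulties you flag go through as written, and the paper's proof is too terse to show how it resolves them. First, the carr\'e-du-champ term. With $f\in\scr U_b$ depending on $x_i$ and $g\in\scr U_b$ on a different coordinate, $\Gamma_1(f,g)=2a_{ik}\,\partial_i f\,\partial_k g\ge0$ (using Lemma~\ref{lem4}), and after plugging $\mu^{(1)}(f)=\mu^{(2)}(f)=0$, $\mu^{(1)}(g)=0$, $\mu^{(2)}(g)=1$ into the (sign-corrected) version of~\eqref{in2} one only gets
\begin{equation*}
\mu^{(2)}(L_{s,\mu}f)-\mu^{(1)}(L_{s,\mu}f)\ \ge\ -2\bigl(\mu^{(1)}(\Gamma_1)+\mu^{(2)}(\Gamma_1)\bigr),
\end{equation*}
a lower bound of the wrong sign; the conclusion follows only if the $\Gamma_1$ contribution vanishes. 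Your proposal to choose $f$ with $\partial_j f$ supported off $\supp a_{ji}(s,\cdot,\mu)$ does not work in general, because $a_{ji}$ may be strictly positive everywhere, in which case the only admissible $f$ is constant and the test is degenerate. Second, the third slot: Lemma~\ref{lem1} produces $b_i(s,\cdot,\mu)$ with the midpoint $\mu$ in the measure argument, not $b_i(s,\cdot,\mu^{(k)})$; you correctly flag this as ``the hard part'' and sketch an interpolation through $\mu_\epsilon$, but the step where you turn the pairwise inequality for $(\mu_\epsilon,\mu_{\epsilon'})$ into control of the Lions-type derivative of $b_i$ in its measure slot is precisely where the argument is missing. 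The paper's proof jumps straight from Lemma~\ref{lem1} to the claimed inequality with $\mu^{(1)}$, $\mu^{(2)}$ in the third slot and never addresses either point. So you have reproduced the paper's approach and located where it is incomplete, but your attempted repairs do not yet close either gap.
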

\begin{proof}
Since $\mu^{(1)}_i$, $\mu^{(2)}_i$, $\mu^{(1)}_{\{i\}^c}$, $ \mu^{(2)}_{\{i\}^c}\in \scr P_+, $ we deduce $\mu^{(1)},\mu^{(2)}\in\scr P_+.$
For given $i$ and $k\neq i$, take $f,g \in \scr U_b\cap C_b^\infty(\R^d)$  such that in a neighbourhood of $x$,$$f(z)=z_i -\int_\R r\mu^{(1)}_i(\d r),~~~~g(z)=\frac{\frac{h(z_k)}{1+h(z_k)}-\mu^{(1)}(\frac{ h}{1+ h})}{\mu^{(2)}(\frac{ h}{1+ h})-\mu^{(1)}(\frac{ h}{1+ h})},$$
where $h\in C^{\infty}(\R;\R_+)$ is an increasing function. Note that 
$$
\mu(fg)=\mu(f)=0,\quad \mu=(\mu^{(1)}+\mu^{(2)})/2.
$$
So applying   Lemma \ref{lem1} gives
$$\int b_i(t,  x, \mu^{(1)})\mu^{(1)}(\d x)\leq \int b_i(t,  x,\mu^{(2)})\mu^{(2)}(\d x).$$
\end{proof}

\begin{proof}[Proof of Theorem \ref{T4}]  Since $\bar P_{s,t}^*\mu$ is  the marginal distributions of    $\LL_s\mu$ at time $t$, 
$  \LL_s\in \scr P_+$ implies  $   P_{s,t}^*\in \scr P_+$ for $t\ge s.$  So, it suffices to prove $  \LL_s\in \scr P_+^s$.
To this end,  we only need to prove that for any $\mu_0\in \scr P_+$ and $T>s\ge 0,$ the marginal distribution $\LL_{s,T}\mu_0$ of $\LL_s\mu_0$ on $C_{s,T}:= C([s,T];\R^d)$ satisfies 
\beq\label{*G} (\LL_{s,T}\mu_0) (FG)\ge (\LL_{s,T}\mu_0) (F)\LL_{s,T}(G)\end{equation}
for any bounded increasing functions $F,G$ on $C_{s,T}.$ 
To achieve this,    let
$$\scr D_+=\big\{\nu\in C([s,T];\scr P_2(\R^d)):\nu_s=\mu_0,~\nu_t\in \scr P_+, ~t\in [s,T]\big\},$$ which is a Polish space under the metric for $\lambda >0:$
\beq\label{C2}
\W_{2,\lambda}(\mu,\nu):=\sup_{t \in [s,T]} (\e^{-\lambda t} \W_2(\mu_t,\nu_t)).
\end{equation}
For $\nu \in C([s,T];\scr P_2(\R^d)),  x\in\R^d$ and $t\in [s,T]$,   let
$$b_t^{\nu}(x)=b_t(x, \nu_t),~~~\si_t^{\nu}(x)=\ss{2 a_t(x, \nu_t)}.$$
For $\nu\in \scr D_+ $, consider the following time-dependent SDE
\beq\label{EQ3}
\d X_t^{\nu}=b_t^{\nu}(X_t^{\nu})\d t+\si_t^{\nu}(X_t^{\nu})\d W_t,\quad t\in[s,T],~~X_s^{\nu}=X_s\sim \mu_0.
\end{equation}

For $\nu\in \scr D_+,$ we define the mapping $\nu\mapsto \Phi(\nu)$ as below, $$(\Phi(\nu))_t:=\scr L_{X_t^{\nu}},\quad t\in[s,T].$$
Under $(1)$ and $(2)$, by $\mu_0\in \scr P_+$ and Theorem \ref{T1.6}, we have $\L_{X_t^{\mu}} \in \scr P_+  $ so that   $\Phi: \D \to \D. $
Below, we assume that $\nu^1, \nu^2\in \scr D.$ By It\"o's formula and \eqref{A1},  it follows that
\begin{equation*}
\begin{split}
\d|X_t^{\nu^1}-X_t^{\nu^2}|^2&=\big\{2\<X_t^{\nu^1}-X_t^{\nu^2}, ~b_t^{\nu^1}(X_t^{\nu^1})-b_t^{\nu^2}(X_t^{\nu^2})\>+\|\si_t^{\nu^1}(X_t^{\nu^1})-\si_t^{\nu^2}(X_t^{\nu^2})\|_{HS}^2 \big\}\d t \\
&\quad+2\<X_t^{\nu^1}-X_t^{\nu^2}, (\si_t^{\nu^1}(X_t^{\nu^1})-\si_t^{\nu^2}(X_t^{\nu^2}))\d  W_t\>\\
&\leq K (|X_t^{\nu^1}-X_t^{\nu^2}|^2+\W_2(\nu_t^1, \nu_t^2)^2)\d t+2\<X_t^{\nu^1}-X_t^{\nu^2},  (\si_t^{\nu^1}(X_t^{\nu^1})-\si_t^{\nu^2}(X_t^{\nu^2}))\d W_t\>.
\end{split}
\end{equation*}
Thus, taking expectation on both side yields,
$$\E|X_t^{\nu^1}-X_t^{\nu^2}|^2\leq \E|X_s^{\nu^1}-X_s^{\nu^2}|+K(T)\int_s^t\big(\E|X_r^{\nu^1}-X_r^{\nu^2}|^2+\W_2(\nu^1_r,\nu^2_r)\big)\d r.$$
Noting that, $X_s^{\nu^1}=X_s^{\nu^2},$ so we have
$$\E|X_t^{\nu^1}-X_t^{\nu^2}|^2\leq K(T)\int_s^t\W_2(\nu^1_r,\nu^2_r)^2\d r+K(T)\int_s^t \E|X_r^{\nu^1}-X_r^{\nu^2}|^2\d r.$$
Then Growall's inequality gives
\begin{equation*}
\E|X_t^{\nu^1}-X_t^{\nu^2}|^2\leq K(T)\e^{K(T)T}\int_s^t\W_2(\nu^1_r,\nu^2_r)^2\d r. 
\end{equation*}
This implies for any $\lambda>0,$
\begin{equation}\label{EQ5}
\begin{split}
\e^{-2\lambda t}\E|X_t^{\nu^1}-X_t^{\nu^2}|^2&\leq K(T)\e^{K(T)T}\int_s^t \e^{-2\lambda(t-r)} \e^{-2\lambda r}\W_2(\nu^1_r, \nu^2_r)^2\d r\\
&\leq \frac{K(T)\e^{K(T)T}}{2\lambda}\W_{2,\lambda}(\nu^1, \nu^2)^2.
\end{split}
\end{equation}
Observe that
\begin{equation*}
\begin{split}\W_{2,\lambda}(\Phi(\nu^1),\Phi(\nu^2))&= \sup_{t \in [s,T]} \big(\e^{-\lambda t} \W_2\big(\Phi(\nu^1))_t,(\Phi(\nu^2))_t\big)\big) \leq \sup_{t\in[s,T]}  \big(\e^{-2\lambda t}\E|X_t^{\nu^1}-X_t^{\nu^2}|^2\big)^{\frac{1}{2}}\\
&\leq \Big(\frac{K(T)\e^{K(T)T}}{2\lambda}\Big)^{\frac{1}{2}}\W_{2,\lambda}(\nu^1, \nu^2),
\end{split}
\end{equation*}
where the last inequality is due to \eqref{EQ5}. Then, by  taking $\lambda=4K(T)\e^{K(T)T},$ we conclude
$$W_{2,\lambda}(\Phi(\nu^1),\Phi(\nu^2))\leq \frac{1}{2}\W_{2,\lambda}(\nu^1,\nu^2).$$
This implies that $\scr D_+\ni \nu \mapsto \Phi(\nu)$ is contractive under the metric $\W_{2,\lambda}.$ Therefore,  the Banach fixed point theorem implies that the mapping $\nu \mapsto \Phi(\nu)$ has a unique fixed point, denoted by $\nu.$ Consequently, we have
$$(\Phi(\nu))_t=\nu_t=\scr L_{X_t^{\nu}}\in\scr P_+, ~~t\in[s,T],$$ so that $\LL_{s,T}:=\L_{(X_t^\nu)_{t\in [s,T]}}$.
Therefore, by applying Theorem \ref{T1.6} to the diffusion process generated by $L_t$ with coefficients $(b^\nu,a^\nu)$, we conclude that the present conditions (1) and (2)  imply \eqref{*G}
as desired. 
 
\end{proof}

%\beg{thm}
%\beg{enumerate}
%\item[$(a)$] If $P_{s,t}^* \in \scr P_+$ for $t\geq s,$ then $\cdots$
%\item[$(b)$] If $\Lambda_s \in \scr P_+$ for $s\geq 0,$  then $\cdots$
%\end{enumerate}
%\end{thm}

\begin{proof}[Proof of Theorem \ref{T5}] 
Consider the decoupled SDE
\begin{equation}
\d X_{s,t}^{x,\mu}=b(t, X_{s,t}^{x,\mu}, P_{s,t}^*\mu)\d t+\si(t, X_{s,t}^{x,\mu}, P_{s,t}^*\mu)\d W_t,~\quad t\ge s, \, ~X_{s,s}^{x,\mu}=x,
\end{equation}
where $P_{s,t}^*\mu$ is the marginal distribution of $\Lambda_s$ at the time $t.$
For $x\in \R^d,$ let $\Lambda_s^{x,\mu}=\L_{X_{[s,\infty)}^{x,\mu}}$. Then, for any $\nu \in \scr P,$
$$\Lambda_s^{\nu,\mu}:=\int_{\R^d} \Lambda_s^{x,\mu}\nu(dx)$$is the law of $X_{[s,\infty)}^{\nu,\mu}$ with initial distribution $\nu.$ Note that 
$$\Lambda_s^{\mu}=\Lambda_s^{\mu,\mu}=\L_{X_{[s,\infty)}^{\mu}}.$$
Since $\Lambda_s^{\mu}\in\scr P_+^s$,  we have
\begin{equation}\label{FKG3}
\Lambda_s^{\mu}(FG)\geq \Lambda_s^{\mu}(F)\Lambda_s^{\mu}(G),~~F,G\in \scr U(C_s), \mu\in\scr P_+.
\end{equation}
For $\gamma\in C_s$, let $F(\gamma)=f(\gamma_s)$ with $0\le f\in \scr U(\R^d).$ Then \eqref{FKG3} becomes
$$\Lambda_s^{\nu, \mu}(G)\geq \Lambda_s^{\mu, \mu}(G),~G\in \scr U(C_s),$$
where $\nu(dx):=\frac{f(x)\mu(dx)}{\mu(f)}.$ That is, $\Lambda_s^{\nu, \mu} \geq \Lambda_s^{\mu, \mu}$.  Then  there exit  $\pi_s \in \C(\Lambda_s^{\nu, \mu},\Lambda_s^{\mu, \mu})$   and   Brownian motions $B_t^1~\mbox{and}~B_t^2$ on $(\Omega, \F_t, \P):=(C_s, \si(\gamma_r: r\in [s,t]), \pi_s)$ such that
\begin{equation}\label{PSDE3}
\begin{cases}
&\d \xi_t= b(t, \xi_t, P_{s,t}^*\mu) \d t+ \si(t, \xi_t, P_{s,t}^*\mu) \d B_t^1,~~\L_{\xi_s}=\nu,~~t\geq s\\
& \d \eta_{t}=b(t,\eta_{t}, P_{s,t}^*\mu) \d t+\si(t,\eta_{t}, P_{s,t}^*\mu) \d B_t^2,~~\L_{\eta_s}=\mu,~~ t\geq s.
\end{cases}
\end{equation}
satisfy $\pi_s(\xi\geq\eta)=1.$

For any increasing $0\le f\in \scr U_b$, which  does not depend on $x_i, x_j,$ we have $\nu:=\frac{f d\mu}{\mu(f)}$ with $\mu=\mu_{\{ij\}}\times\mu_{\{ij\}^c}$ such that  $\nu_{ij}=\mu_{ij}.$ Thus, $\xi_s^i=\eta_s^i,~\xi_s^j=\eta_s^j.$ So,
\begin{equation}\label{PSDE4}
\begin{split}
&\xi_t^k-\eta_t^k= \int_s^t \big( b_k(r, \xi_r, P_{s,r}^*\mu)- b_k(r, \eta_r, P_{s,r}^*\mu)\big) \d r\\
&\quad+ \int_s^t \big\< \si_{k\cdot}(r,\eta_{r}, P_{s,r}^*\mu),  \d B_t^1\big\>-\int_s^t \big\< \si_{k\cdot}(r,\eta_{r}, P_{s,r}^*\mu),  \d B_t^2\big\>\geq 0,\quad k=i,j.
\end{split}
\end{equation}
Thus, by following the argument to derive \eqref{NN5}, we have 
$$a_{ij}(s,\xi_s,\mu)=\si_{i\cdot}(s,\xi_s,\mu)\si_{i\cdot}(s,\xi_s,\mu)=\si_{i\cdot}(s,\eta_s,\mu)\si_{j\cdot}(s,\eta_s,\mu)=a_{ij}(s,\eta_s,\mu).$$
This, together with $\nu=\frac{f d\mu}{\mu(f)}$, $\L_{\xi_s}=\nu$ and $\L_{\eta_s}=\mu$, leads to 
\begin{equation}\label{PSDE5}
\begin{split}
\int f(x) a_{ij}(s,x,\mu)\mu(dx)&= \mu(f)\E a_{ij}(s,\xi_s,\mu)\\
&= \mu(f)\E a_{ij}(s,\eta_s,\mu)=\mu(f)\int a_{ij}(s,x,\mu)\mu(dx).
\end{split}
\end{equation}
Let $g$ be a function such that $$\mu(fg)=\mu(f)\mu(g).$$ Then, for $f(x)=I_{A}(x_k: k\neq i,j)$ with $A\in \scr B(\R^{(d-2})),$
we obtain
$$\E^{\mu}(I_{A}g)=\int _{\R^d} I_{A}(x)g(x)\mu(dx)=\mu(A)\mu(g).$$
Now, by the definition of  conditional expectation we get
$$\E^{\mu}\big( g|x_k:k\neq i,j\big)=\mu(g),$$ which obviously  implies that $g$ depends only on $x_i,x_j$. Thus, \eqref{PSDE5} yields the first assertion.

Dividing by $t-s$ on both side of \eqref{PSDE4} and  taking  $t\to s,$ we get
$$\E b_i(s, \xi_s, \mu)\geq \E b_i(s, \xi_s, \mu).$$
This, together with  $\nu=\frac{f d\mu}{\mu(f)}$, $\L_{\xi_s}=\nu$ and $\L_{\eta_s}=\mu$, leads to the second assertion. \end{proof}

\paragraph{Acknowledgement.} The author would like to thank Professor Feng-Yu Wang for his helpful comments.

\end{document}